\newcommand{\ie}{{\em i.e.,} }
\newtheorem{thm}{Theorem}[section]
\newtheorem{cor}[thm]{Corollary}
\newtheorem{lem}[thm]{Lemma}
\newtheorem{prop}[thm]{Proposition}
\theoremstyle{definition}
\newtheorem{defn}[thm]{Definition}
\newtheorem{rem}[thm]{Remark}
\numberwithin{equation}{section}
\def\Blem {\begin{lem}}
\def\Elem {\end{lem}}
\def\be {\begin{equation}}
\def\ee {\end{equation}}
\def\ba {\begin{eqnarray}}
\def\ea {\end{eqnarray}}
\def\bes {\begin{equation*}}
\def\ees {\end{equation*}}
\def\bas {\begin{eqnarray*}}
\def\eas {\end{eqnarray*}}
\def\bpr {\begin{proof}}
\def\epr {\end{proof}}
\newtheorem{mtheorem}{Theorem}
\begin{document}
\allowdisplaybreaks
\title[Global Centers and Phase Portraits in Generalized Duffing Oscillators]{Global Centers and Phase Portraits in Generalized Duffing Oscillators: A Comprehensive Study of the Center-Focus Problem
}

\begin{abstract}
This work presents a comprehensive study of the generalized Duffing oscillator, a fundamental model in nonlinear dynamics described by the system
\[
\dot{x} = y, \quad \dot{y} = -\alpha y - \epsilon x^m - \sigma x,
\]
where \(\epsilon \neq 0\) and \(m \geq 1\). We focus on the topological classification of phase portraits, the characterization of global centers, and the  absence of limit cycles for $\alpha\neq0$. For the linear case (\(m = 1\)), we establish necessary and sufficient conditions for the origin to be a global center, showing that this occurs if, and only if, \(\alpha = 0\) and \(\epsilon + \sigma > 0\). For the nonlinear case (\(m > 1\)), we prove that the origin is a global center if, and only if, \(m\) is odd, \(\sigma, \epsilon > 0\), \(\alpha = 0\). Additionally, we classify the global phase portraits for every \(m\), demonstrating the rich dynamical behavior of the system and detect homoclinic, heteroclinic and double-homoclinic cycle for \(\alpha=0.\) Using the Bendixson-Dulac criterion, we rule out the existence of limit cycles for $\alpha\neq 0$, further clarifying the behavior of the system. Our results resolve the center-focus problem for the degenerate case \(\alpha = 0\) and provide a complete characterization of global centers for generalized Duffing oscillators of odd degrees. These findings contribute to the broader understanding of nonlinear dynamical systems and have potential applications in modeling oscillatory phenomena. 
\end{abstract}

\author{Gabriel Rond\'on \({}^{1}\)  and Nasrin Sadri\({}^{2}\)}

\address{$^{1}$ Departament de Matemàtiques, Edifici Cc, Universitat Autònoma de Barcelona, 08193 Bellaterra, Barcelona, Catalonia, Spain}
\email{garv202020@gmail.com}

\address{$^2$ School of Mathematics, Statistics and Computer Science, College of Science, University of Tehran, Tehran, Iran and School of Mathematics, Institute for Research in Fundamental Sciences (IPM)}
\email{ n.sadri@ut.ac.ir}

\keywords{Generalized Duffing oscillators, Global center, Quasi-homogeneous blow-up.}

\subjclass[2020]{34C05.}

\maketitle

\section{Introduction}
The Duffing oscillator and its generalizations are fundamental models in the study of nonlinear dynamical systems, with applications spanning mathematics, physics, and biology. These systems are described by the following set of polynomial ordinary differential equations:
\begin{equation}\label{gdo}
    \dot{x} = y, \quad \dot{y} = -\alpha y - \epsilon x^m - \sigma x,
\end{equation}
where \(\epsilon \neq 0\) and \(m \geq 1\). These equations belong to the Li\'enard family of differential systems \cite{LLIBRE202266} and have been widely used to model phenomena such as the interaction between diffusion and nonlinear reaction terms in the Newell-Whitehead-Segel equation. In particular, the traveling wave solutions of the Newell-Whitehead-Segel equation can be represented by generalized Duffing systems.

Several studies have explored the integrability of the force-free Duffing-van der Pol oscillator under specific parametric conditions. Chandrasekar and et al. (2004) demonstrated its integrability for certain parameter restrictions, and later, in 2006, they extended their findings by identifying additional integrable cases for arbitrary values of the exponent \cite{Chandrasekar, Chandrasekar1}.
The exact solutions of the Duffing oscillator equation have been examined in earlier works, including those by Parthasarathy and Lakshmanan (1990) \cite{Parthasarathy}, as well as Salas and Castillo (2014) \cite{Salas}.
Further investigations into the integrability of force-free oscillators have been conducted in subsequent years. Demina (2018) systematically analyzed conditions under which the classical force-free Duffing and Duffing-van der Pol oscillators admit Liouvillian first integrals \cite{Demina}. Late, in 2021, she
studied the liouvillian integrability for the classical force-free generalized Duffing oscillators \cite{Demina1}. 
Similarly, Ruiz and Muriel (2018) explored the integrability of generalized force-free Duffing–van der Pol equations through the framework of \(\lambda\)-symmetries and solvable structures \cite{Ruiz}. Additionally, Stachowiak (2019) investigated the existence of first integrals for both Duffing and van der Pol oscillators \cite{Stachowiak}.

The case \( m > 1 \) corresponds to a nonlinear system, where the term \( \epsilon x^m \) introduces nonlinearities essential for capturing complex behaviors such as bifurcations, chaos, and multiple equilibria. On the other hand, the case \( m = 1 \) reduces the system to a damped linear oscillator, which describes physical phenomena such as harmonic oscillations in mechanical, electrical, or biological systems. This linear limit not only provides a foundation for understanding the behavior of the system in small-amplitude regimes but is also relevant in contexts where nonlinear interactions are negligible.

In this work, we conduct a comprehensive study of the generalized Duffing oscillator \eqref{gdo}, focusing on the topological classification of phase portraits, the characterization of global centers, the analysis of finite and infinite equilibrium points and the absence of limit cycles for $\alpha\neq 0$. Our first main result is presented in the following theorem.

\begin{mtheorem}\label{1}
    The following statements hold for system \eqref{gdo}:
    \begin{itemize}
        \item[(a)] 
        When \(\alpha = 0,\) the global phase portrait is topologically equivalent to one of the phase portraits in Figure \ref{M44}.
        \item[(b)] 
        For \(m = 1\), the global phase portrait is topologically equivalent to one of the phase portraits in Figure \ref{M11}.
        \item[(c)]
        If \(m \) is even, the global phase portrait is topologically equivalent to one of the phase portraits in Figure \ref{M22}.
        \item[(d)] 
       If \(m>1\) is odd, the global phase portrait is topologically equivalent to one of the phase portraits in Figure \ref{M33}.
    \end{itemize}
\end{mtheorem}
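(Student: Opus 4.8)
The plan is to determine, for each parameter regime, the complete separatrix configuration of the compactified system on the Poincar\'e disk and then invoke the Markus--Neumann--Peixoto theorem: two such flows are topologically equivalent precisely when their separatrix configurations coincide. Thus it suffices to (i) locate and classify all finite singularities, (ii) locate and classify all singularities at infinity, (iii) rule out limit cycles, and (iv) reconstruct the global connections among separatrices. Each of the four listed cases then reduces to checking that the resulting configuration matches one of the prescribed figures.

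For the finite singularities I would start from $\dot x=0\Rightarrow y=0$ and $\dot y=0\Rightarrow x(\epsilon x^{m-1}+\sigma)=0$, so that the origin is always an equilibrium while the remaining real roots of $\epsilon x^{m-1}=-\sigma$ depend on the parity of $m$ and on the signs of $\epsilon,\sigma$. The Jacobian is
\[
J(x,y)=\begin{pmatrix} 0 & 1\\ -m\epsilon x^{m-1}-\sigma & -\alpha\end{pmatrix},
\]
so at the origin the eigenvalues solve $\lambda^2+\alpha\lambda+\sigma=0$, which already separates node/focus/saddle/center behavior; the non-hyperbolic cases ($\alpha=0$, or $\sigma=0$ with $m>1$) and the remaining equilibria with degenerate linear part require local desingularization. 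For case (a), $\alpha=0$, the decisive simplification is that the system is Hamiltonian with
\[
H(x,y)=\tfrac12 y^2+\frac{\epsilon}{m+1}x^{m+1}+\frac{\sigma}{2}x^2,
\]
so every finite orbit lies on a level curve of $H$; the finite portrait, and in particular the homoclinic, heteroclinic, and double-homoclinic cycles, is then read off from the critical level sets of $H$.

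The main obstacle is the analysis at infinity. Since the system has degree $m$, the origins of the Poincar\'e charts $U_1,V_1$ are highly degenerate (nilpotent, or with identically zero linear part) for $m>1$, so linearization fails. I would desingularize them by quasi-homogeneous (weighted) blow-up adapted to the dominant balance $\dot x=y$, $\dot y\sim-\epsilon x^m$, for which the natural weights are $(2,m+1)$; equivalently, I would work in the Poincar\'e--Lyapunov compactification with these weights. The delicate point is to track, after one or more blow-ups, exactly which elementary singularities appear on the exceptional divisor, their hyperbolic/semi-hyperbolic type, and hence how many separatrices enter or leave infinity. This count, together with its dependence on the parity of $m$ and on $\mathrm{sign}(\epsilon),\mathrm{sign}(\sigma)$, is precisely what distinguishes the figures in cases (b), (c), (d).

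Finally, to assemble the global portraits I would first exclude periodic orbits: for $\alpha\neq0$ via the Bendixson--Dulac criterion announced in the introduction, and for $\alpha=0$ directly from the Hamiltonian structure. With no limit cycles, the Poincar\'e--Bendixson theorem, the Poincar\'e--Hopf index formula (used to verify consistency of the counted singularities and their indices), and the explicit identification of the stable and unstable manifolds of the saddles determine all separatrix connections uniquely in each regime. Matching these finitely many admissible configurations against Figures \ref{M44}, \ref{M11}, \ref{M22}, and \ref{M33} then yields statements (a)--(d).
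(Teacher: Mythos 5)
Your proposal is correct in outline and follows the same skeleton as the paper: classify the finite equilibria from the Jacobian, exploit the Newtonian/Hamiltonian structure $H=\tfrac12y^2+\tfrac{\epsilon}{m+1}x^{m+1}+\tfrac{\sigma}{2}x^2$ when $\alpha=0$ to read off the homoclinic, heteroclinic and double-homoclinic loops from the critical level sets, exclude limit cycles for $\alpha\neq0$ by Bendixson (here $\mathrm{div}=-\alpha$ is a nonzero constant), and desingularize infinity before assembling the separatrix configurations. The one genuine methodological difference is at infinity: you propose a Poincar\'e--Lyapunov compactification with the quasi-homogeneous weights $(2,m+1)$ of the dominant balance $\dot x=y$, $\dot y\sim-\epsilon x^m$, whereas the paper uses the standard Poincar\'e compactification, observes that all the degeneracy concentrates at the origin of the chart $U_2$ (a nilpotent point for $m$ even, a linearly zero point for $m$ odd), and then resolves that single point by a quasi-homogeneous blow-up whose weights are read off the Newton diagram of the chart system ($(2n,2n-1)$ for $m=2n$, $(n+1,n)$ for $m=2n+1$). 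Your route tends to produce elementary singularities on the circle at infinity in one step and avoids the secondary blow-up, at the cost of working on a weighted disc rather than the ordinary Poincar\'e disc on which the figures are drawn (and for $m$ odd the weights $(2,m+1)$ are not coprime, so you should reduce them or track the resulting double cover); both routes yield the same sectorial decompositions at infinity, which is exactly the input needed later for Proposition \ref{prop_main}. One small slip to fix: at the origin the characteristic polynomial is $\lambda^2+\alpha\lambda+\sigma$ only for $m>1$; for $m=1$ it is $\lambda^2+\alpha\lambda+(\epsilon+\sigma)$, and this matters because the saddle/center dichotomy in case (b) is governed by the sign of $\epsilon+\sigma$, not of $\sigma$.
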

 
The dynamics of the system depend on several parameters: \(\alpha\), \(\epsilon\), \(\sigma\), and \(m\). The parameter \(\alpha\) plays a crucial role in determining the nature of the equilibrium points, particularly in the transition between dissipative and conservative dynamics. When \(\alpha \neq 0\), the system exhibits well-defined phase portraits, as described in Theorem \ref{1}. However, the case \(\alpha = 0\) presents a critical scenario where the eigenvalues of the equilibrium points are purely imaginary. This degenerate case lies at the boundary between distinct dynamical behaviors, making it a particularly intriguing and challenging case to analyze.

When the linearization of a system yields purely imaginary eigenvalues, the nature of the equilibrium point in the full nonlinear system is not determined solely by the linear terms. The nonlinear terms play a decisive role in distinguishing whether the equilibrium is a \textit{center} (surrounded by closed orbits) or a \textit{focus} (with spiraling trajectories). This is known as the \textit{center-focus problem}, a classical issue in the theory of dynamical systems that has attracted significant attention due to its mathematical subtlety and practical implications. Resolving this problem is essential for a complete understanding of the behavior of the system, particularly in the transition regime as \(\alpha\) approaches zero.

To address this problem, we first in the following theorem establish necessary and sufficient conditions under which the origin of system \eqref{gdo} is a center, highlighting the role of the parameters \(\alpha\), \(\epsilon\), \(\sigma\), and the degree \(m\).

The classification of centers depends on whether the linear part of the system
\begin{equation}\label{red1}
\dot{x} = X(x, y), \quad \dot{y} = Y(x, y),
\end{equation}
vanishes at the origin. Based on this, there are three distinct types of centers. After applying appropriate variable transformations and rescaling, system \eqref{red1} with a center can be expressed in one of the following canonical forms:

\begin{equation*}
\begin{aligned}
&\dot{x} = y + P(x, y), && \dot{y} = -x + Q(x, y), \\
&\dot{x} = y + P(x, y), && \dot{y} = Q(x, y), \\
&\dot{x} = P(x, y),     && \dot{y} = Q(x, y).
\end{aligned}
\end{equation*}
Here, \( P(x, y) \) and \( Q(x, y) \) are polynomials satisfying \( P(0, 0) = Q(0, 0) = 0 \), with all terms being of degree two or higher. Depending on the form, the origin is referred to as a linear type center, a nilpotent center, or a degenerate center, respectively, \cite{chen1,chen2}.

\begin{mtheorem}\label{prop1}
    The origin is the unique equilibrium of the polynomial differential system \eqref{gdo} and is of linear center type if, and only if:
    \begin{itemize}
        \item \( m = 1 \), \(\epsilon + \sigma > 0\) and \(\alpha = 0\); or
        \item \( m > 1 \) is odd, \(\sigma, \epsilon > 0\), and \(\alpha = 0\).
    \end{itemize}
\end{mtheorem}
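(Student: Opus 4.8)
The plan is to establish the two asserted properties — that the origin is the unique equilibrium and that it is a center of linear type — simultaneously, by separating the \emph{algebraic} question of counting equilibria from the \emph{analytic} question of the local phase portrait, and then intersecting the resulting parameter conditions. I would organize the argument as an if-and-only-if in which each direction is read off from the same case analysis.

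First I would locate all finite equilibria. Setting $\dot x = 0$ forces $y = 0$, and then $\dot y = 0$ reduces to $x(\epsilon x^{m-1} + \sigma) = 0$, so apart from the origin any equilibrium must satisfy $x^{m-1} = -\sigma/\epsilon$. A short parity analysis then settles uniqueness: for $m=1$ the bracket is the constant $\epsilon+\sigma$, so the origin is the unique equilibrium iff $\epsilon+\sigma\neq 0$; for $m>1$ even the exponent $m-1$ is odd, so $x^{m-1}=-\sigma/\epsilon$ always has a real root, which is nonzero as soon as $\sigma\neq 0$, producing an extra equilibrium; for $m>1$ odd the exponent $m-1$ is even, so a nonzero real root exists iff $\sigma/\epsilon<0$. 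Next I would linearize at the origin: the Jacobian is $\left(\begin{smallmatrix}0 & 1 \\ -c & -\alpha\end{smallmatrix}\right)$ with $c=\epsilon+\sigma$ when $m=1$ and $c=\sigma$ when $m>1$ (since $x^{m-1}$ vanishes at the origin for $m>1$). A center of linear type requires nonzero purely imaginary eigenvalues, equivalently zero trace and positive determinant, which here is exactly $\alpha=0$ together with $c>0$; in particular $\alpha=0$ is necessary.

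The step where the genuine center--focus difficulty would usually lie is handled by a structural observation: the necessary condition $\alpha=0$ collapses \eqref{gdo} into the Hamiltonian field with $H(x,y)=\tfrac{1}{2}y^2+\tfrac{1}{2}\sigma x^2+\tfrac{\epsilon}{m+1}x^{m+1}$. Since an analytic Hamiltonian system conserves energy and hence admits no focus, a monodromic singular point is automatically a center; concretely, when $c>0$ the origin is a nondegenerate local minimum of $H$ whose nearby level curves $H=\mathrm{const}$ are closed. Thus, under $\alpha=0$, the purely imaginary eigenvalue condition is not only necessary but sufficient for a true linear type center, which is what makes the otherwise subtle problem tractable.

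Finally I would intersect the conditions. For $m=1$, uniqueness ($\epsilon+\sigma\neq 0$) is implied by the center condition $\epsilon+\sigma>0$, giving the first bullet. For $m>1$ even, the center condition forces $\sigma>0$, but then the equilibrium count produces a second equilibrium, contradicting uniqueness, so even $m$ is excluded. For $m>1$ odd, the center condition gives $\sigma>0$, and uniqueness then requires $\sigma/\epsilon\ge 0$, i.e. $\epsilon>0$ (recall $\epsilon\neq 0$), yielding $\sigma,\epsilon>0$; conversely these imply both uniqueness and a center. I expect the main obstacle to be organizational rather than analytic — cleanly reconciling the parity-dependent equilibrium count with the sign constraints coming from the linear part, and in particular ruling out even $m$ — since the Hamiltonian structure removes the usual analytic hardness of the center--focus problem.
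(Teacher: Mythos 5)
Your proposal is correct and follows essentially the same route as the paper: the uniqueness condition is obtained from the factorization $x(\epsilon x^{m-1}+\sigma)=0$ with the same parity analysis (the paper's Proposition \ref{prop_sing} and Corollary \ref{lem_center}), and the center is established by exhibiting the first integral $H=\tfrac12 y^2+\tfrac{\sigma}{2}x^2+\tfrac{\epsilon}{m+1}x^{m+1}$ when $\alpha=0$ (the paper's Proposition \ref{prop_center}, which writes the same integral after a Jordan normalization and invokes Poincar\'e--Lyapunov, whereas you argue via the nondegenerate minimum of $H$ --- an equivalent and equally valid step).
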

This result is a crucial step in our analysis, as it identifies the specific parameter regime in which the system exhibits a center at the origin. It also sets the stage for the study of \textit{global centers}, where the entire phase space is filled with closed orbits. For an oscillator, having a global center means that the system will oscillate periodically for any initial condition, and its motion will always remain bounded, closed, and non-decaying. There is no damping, like friction, and no energy input or loss, so the oscillator moves in perfect cycles forever. Physically, this represents an idealized oscillator that conserves energy completely.

For the linear case (\( m = 1 \)), if the origin is the unique equilibrium point (both finite and at infinity) and is a center (i.e., \(\alpha = 0\) and \(\epsilon + \sigma > 0\)), it automatically becomes a global center. This is because the absence of other equilibria ensures that all trajectories in the phase space are closed orbits surrounding the origin. Thus, no further analysis is required to establish the global center property in this case.

However, for the nonlinear case (\( m > 1 \)), the situation is more intricate. In addition to addressing the center-focus problem, we aim to establish necessary and sufficient conditions for the existence of a global center in the system when \(\alpha = 0\). A global center is characterized by the property that all trajectories in the phase space are closed orbits, implying a conservative structure without dissipation or amplification. This is a stronger and more restrictive condition than the existence of a local center, and its analysis requires a deeper exploration of the invariants and symmetries of the system. The next result provides a characterization of the generalized Duffing oscillators \eqref{gdo} with a global center at the origin, offering new insights into the global dynamics of the system.

\begin{mtheorem}\label{teoB}
    The polynomial generalized Duffing oscillator \eqref{gdo} has a global center at the origin if, and only if, the following conditions are satisfied:
        \begin{itemize}
            \item For \( m = 1 \): \(\epsilon+\sigma > 0 \) and \( \alpha = 0 \); or
            \item For \( m > 1 \): \( m \) is odd, \( \sigma, \epsilon > 0 \) and \( \alpha = 0 \).
        \end{itemize}
\end{mtheorem}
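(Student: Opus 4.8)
The plan is to treat the two implications separately, exploiting the fact that the only parameter regime capable of producing periodic orbits is $\alpha=0$, where the system becomes Hamiltonian.

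For the sufficiency, the linear case $m=1$ is immediate (as already noted): with $\alpha=0$ and $\epsilon+\sigma>0$ the system is linear with purely imaginary eigenvalues, every orbit is an ellipse, and the origin is a global center. For $m>1$ odd with $\sigma,\epsilon>0$ and $\alpha=0$, I would exhibit the first integral
\[
H(x,y)=\frac{1}{2}y^{2}+\frac{\epsilon}{m+1}\,x^{m+1}+\frac{\sigma}{2}\,x^{2}.
\]
Because $m$ is odd, $m+1$ is even, so $x^{m+1}\ge 0$; combined with $\epsilon,\sigma>0$ this makes $H$ positive definite (vanishing only at the origin) and coercive, i.e. $H\to+\infty$ as $\|(x,y)\|\to\infty$. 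Every level set $\{H=c\}$ with $c>0$ is then a compact, regular, connected curve encircling the origin, these curves foliate $\mathbb{R}^{2}\setminus\{\mathbf 0\}$, and each is a periodic orbit, so the origin is a global center.

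For the necessity, I would first force $\alpha=0$. The divergence of the field equals $\partial_{x}\dot x+\partial_{y}\dot y=-\alpha$, a nonzero constant when $\alpha\neq 0$; by the Bendixson criterion the system then has no periodic orbit in $\mathbb{R}^{2}$, so the origin cannot be a center. With $\alpha=0$ the system is Hamiltonian with the $H$ above (and $H=\tfrac12y^2+\tfrac{\epsilon+\sigma}{2}x^2$ when $m=1$), whose orbits lie on the level sets of $H$. Writing $V(x)=\frac{\epsilon}{m+1}x^{m+1}+\frac{\sigma}{2}x^{2}$ for the potential, whose critical points are exactly the equilibria on the $x$-axis, a global center is equivalent to $V$ being proper with a single critical point, a global minimum, at the origin. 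Properness on both half-lines forces $m$ odd and $\epsilon>0$ (if $m$ is even the leading term drives $V\to-\infty$ on one side, so some orbits escape to infinity), while the absence of a second zero of $V'(x)=x(\epsilon x^{m-1}+\sigma)$ forces $\sigma\ge 0$; the strict inequality $\sigma>0$ is then the non-degeneracy (linear-center) requirement of Theorem~\ref{prop1}, the boundary value $\sigma=0$ being the degenerate nilpotent center singled out separately. When these conditions fail, Theorem~\ref{1}(a) exhibits an additional saddle equilibrium whose homoclinic or heteroclinic connection bounds the period annulus, so the center is not global.

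The step I expect to be the main obstacle is making the necessity rigorous at and near infinity: certifying that the period annulus is all of $\mathbb{R}^{2}\setminus\{\mathbf 0\}$—and not merely a bounded neighborhood of the origin—requires controlling the trajectories approaching infinity and the separatrices they may create, which is exactly where the quasi-homogeneous blow-up behind Theorem~\ref{1} is needed to resolve the singular points at infinity (and the degenerate finite boundary $\sigma=0$, where the center ceases to be of linear type). Once that classification is available, Theorem~\ref{teoB} follows by matching each admissible parameter range to the unique global-center portrait in Figure~\ref{M44}.
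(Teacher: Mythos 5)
Your proof is correct in substance but follows a genuinely different route from the paper's. The paper proves Theorem~\ref{teoB} by invoking the general characterization of global centers on the Poincar\'e disc (Proposition~\ref{prop_main}): it combines Theorem~\ref{prop1} (uniqueness of the finite equilibrium together with the center condition) with item~(a) of Proposition~\ref{thm2}, whose quasi-homogeneous blow-up shows that the infinite equilibrium in the chart \(U_2\) is formed by two hyperbolic sectors with separatrices on the equator; the case \(m=1\) is dispatched by Remark~\ref{TCM1}. You instead argue directly with the Hamiltonian \(H=\tfrac12 y^2+V(x)\): positive definiteness and coercivity of \(V\) give compact, regular, connected level curves that foliate \(\mathbb{R}^2\setminus\{\mathbf 0\}\), so no compactification or blow-up is needed for sufficiency, and the necessity of \(\alpha=0\), \(m\) odd, \(\epsilon>0\) follows from Bendixson plus the properness and critical-point analysis of \(V\). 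Your approach is more elementary and self-contained --- the ``main obstacle'' you anticipate at infinity is in fact not an obstacle for a Newtonian system, since unboundedness of a level set of \(H\) already produces orbits escaping to infinity --- while the paper's approach buys uniformity with the rest of its analysis of infinite equilibria and would survive perturbations that destroy the Hamiltonian structure.

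The one soft spot is your derivation of \(\sigma>0\). Your potential analysis only yields \(\sigma\ge 0\), and the appeal to the ``non-degeneracy (linear-center) requirement'' of Theorem~\ref{prop1} is not a valid way to exclude \(\sigma=0\): being a global center does not require the center to be of linear type, and for \(\sigma=0\), \(m>1\) odd, \(\epsilon>0\), \(\alpha=0\) the function \(H=\tfrac12 y^2+\tfrac{\epsilon}{m+1}x^{m+1}\) is still positive definite and coercive, so the origin is a (nilpotent) global center by your own sufficiency argument. The strict inequality in the statement therefore reflects an implicit standing assumption \(\sigma\neq 0\) in the paper (compare Corollary~\ref{lem_center}, which likewise silently excludes \(\sigma=0\)); you should either state that assumption explicitly or treat the boundary case head-on rather than dismissing it as degenerate.
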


In~\cite{ref1}, the authors proved that polynomial differential systems of even degree cannot have global centers, as such systems necessarily possess trajectories that escape to infinity in both forward and backward time. This result is summarized in Theorem~\ref{evendeg}, and it implies that, in order for the system under study to possibly exhibit a global center, the degree \( m \) must necessarily be odd.

The paper is organized as follows. Section \ref{sec:Preliminaries} is divided into six subsections. In subsection \ref{sec:center}, we analyze the conditions under which the origin of the polynomial differential system is a center, focusing on the existence of first integrals and applying the Poincar\'e-Lyapunov Theorem. Subsection \ref{sec:compac} reviews the Poincar\'e compactification and presents the key formulas required for the subsequent analysis. Subsection \ref{sec:globalc} focuses on the characterization of global centers, providing necessary and sufficient conditions for their existence. Subsection \ref{sec:vertical} discusses the use of quasi-homogeneous blow-ups and Newton diagram to determine the local phase portraits of equilibrium points with a zero linear structure. In subsection \ref{sec:ben_crit} we recall the Bendixson-Dulac criterion, which provides conditions to rule out the existence of limit cycles in planar differential systems based on the divergence of the vector field. Finally, subsection \ref{sec:newtonian} presents the Newtonian system and discusses its critical points.

In section \ref{main_A}, we prove Theorem \ref{1}, which classifies the global phase portraits of the system. Section \ref{main_BC} is dedicated to proving Theorems \ref{prop1} and \ref{teoB}, which establish conditions for the origin to be a center and characterize the existence of global centers, respectively. 

\section{Preliminaries}\label{sec:Preliminaries}

This section presents foundational results that will be essential for proving Theorems \ref{1}, \ref{prop1} and \ref{teoB}.

\subsection{The Center Conditions}\label{sec:center}

Consider the polynomial differential system given by
\begin{equation}\label{eq_center}
\dot{x}= -y + P_n(x, y),\quad \dot{y}=x + Q_n(x, y)),
\end{equation}
where \( P_n \) and \( Q_n \) are polynomials of degree \( n \), which do not have neither constant nor linear terms. The Jacobian matrix of the system at the origin has purely imaginary eigenvalues, meaning the origin is either a focus or a center. %Here, we will analyze system \eqref{gdo}.

Recall that a point $p$ is called a \textit{center} of a differential system in $\mathbb{R}^2$ if there exists a neighbourhood $U$ around $p$ such that $U \setminus \{p\}$ is entirely filled with periodic orbits. The largest connected set of periodic orbits that encircles the center $p$ and has $p$ on its boundary is known as the \textit{period annulus} of the center $p$. Additionally, $p$ is termed a \textit{global center} if its period annulus is $\mathbb{R}^2 \setminus \{p\}$, see \cite{chen3}.

A non-constant analytic function $H: \Omega \subset \mathbb{R}^2 \to \mathbb{R}$, defined in a neighborhood $\Omega$ of the origin, is called a \textit{first integral} of system \eqref{eq_center} if it remains constant along any solution curve $\gamma$, which is equivalent to
\begin{equation*}
\frac{\partial H}{\partial x} \dot{x} + \frac{\partial H}{\partial y} \dot{y} \bigg|_\gamma \equiv 0. \quad \quad
\end{equation*}
To determine whether the origin is a center, we employ the \textit{Poincaré-Lyapunov Theorem}. For more details, see, for instance, \cite{ref1, ref2, ref3, ref4}:

\begin{thm}[Poincar\'e-Lyapunov]\label{PL}
The polynomial differential system \eqref{eq_center} has a center at the origin if, and only if, it admits a local analytic first integral of the form
\begin{equation*}
H(x, y) = x^2 + y^2 + \sum_{p=3}^\infty H_p(x, y),
\end{equation*}
where $H_p(x, y)$ is a homogeneous polynomial of degree $p$, given by
\begin{equation*}
H_p(x, y) = \sum_{\ell=0}^p q_{p-\ell, \ell} x^{p-\ell} y^\ell.
\end{equation*}
Moreover, the existence of a formal first integral $H$ of the above form implies the existence of a local analytic first integral.
\end{thm}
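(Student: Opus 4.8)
The plan is to organize the entire argument around the Lie derivative of a candidate first integral along the vector field, writing $X = X_1 + \widehat{X}$ where $X_1 = -y\,\partial_x + x\,\partial_y$ is the rotational linear part and $\widehat{X} = P_n\,\partial_x + Q_n\,\partial_y$ collects the higher-order terms, so that $\dot H = X_1 H + \widehat{X}H$ for any $H$. The easy implication is that an analytic first integral $H = x^2+y^2+\sum_{p\ge 3}H_p$ forces a center: its quadratic part is positive definite, so by the Morse Lemma the origin is a strict isolated minimum and the nearby level sets $\{H = c\}$, $0 < c \ll 1$, are ovals surrounding the origin. Each oval is flow-invariant and contains no equilibrium (the origin being the only local equilibrium), hence is a periodic orbit; this fills a punctured neighborhood with closed orbits, i.e. the origin is a center.

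For the converse I would first carry out the formal normalization. Seeking $H = \sum_{p\ge 2}H_p$ with $H_2 = x^2+y^2$ and matching $\dot H$ by homogeneous degree, at order $p$ one obtains a linear equation $X_1 H_p = R_p$, where $R_p$ is a homogeneous polynomial of degree $p$ determined by $H_2,\dots,H_{p-1}$ and the coefficients of $P_n,Q_n$. The action of $X_1$ on homogeneous polynomials of degree $p$ is transparent in the complex coordinate $z=x+iy$: since $X_1 = i(z\,\partial_z - \bar z\,\partial_{\bar z})$, one has $X_1(z^a\bar z^b) = i(a-b)\,z^a\bar z^b$, so the monomials are eigenvectors with eigenvalues $i(a-b)$. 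Hence $X_1$ is an isomorphism when $p$ is odd (no resonant monomial $a=b$ exists), while for $p=2k$ it has one-dimensional kernel and cokernel, both spanned by $(x^2+y^2)^k$. Solving the recursion is therefore immediate at odd orders and, at even orders, possible after subtracting the obstruction: one chooses $H_{2k}$ so that $R_{2k}$ is cancelled up to a scalar multiple $V_k$ of $(x^2+y^2)^k$. This produces a formal series with
\[
\dot H = \sum_{k\ge 2} V_k\,(x^2+y^2)^k,
\]
the $V_k$ being the Lyapunov (focal) constants. In particular, a formal first integral of the prescribed form exists if and only if every $V_k$ vanishes.

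It remains to match the vanishing of the $V_k$ with the center condition. If some $V_k\ne 0$, let $V_{k_0}$ be the first nonvanishing one and truncate $H$ to the polynomial $\widetilde H$ of degree $2k_0$; then $\dot{\widetilde H} = V_{k_0}(x^2+y^2)^{k_0} + \text{h.o.t.}$ has a fixed sign on a punctured neighborhood of the origin, while $\widetilde H$ is positive definite there. Thus $\widetilde H$ is a strict Lyapunov function, every nearby orbit spirals monotonically toward or away from the origin, and the origin is a focus, not a center. Contradiction; hence a center forces all $V_k=0$. Crucially, this direction uses only the polynomial truncation and needs no convergence.

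The main obstacle is the final implication, which is exactly the ``moreover'' clause: when all $V_k$ vanish the \emph{formal} first integral $H$ is in fact \emph{analytic} (and then yields the center by the first paragraph). The decisive structural point is the \emph{absence of small divisors}: the nonzero eigenvalues of $X_1$ are the numbers $i(a-b)$ with $|a-b|\ge 1$, so on the complement of its kernel the solution operator $X_1^{-1}$ is bounded, with norm $\le 1$, uniformly in the degree $p$. I would exploit this by estimating the recursion in a weighted $\ell^1$-norm on coefficients: since each $R_p$ is assembled from finitely many products of the $H_q$ ($q<p$) with the fixed polynomials $P_n,Q_n$, a majorant-series comparison combined with the uniform bound on $X_1^{-1}$ shows that the coefficients of $H_p$ grow at most geometrically in $p$. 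This gives $H$ a positive radius of convergence, completing the passage from a formal to a local analytic first integral and closing the proof. The uniform bound on $X_1^{-1}$ is precisely what separates this resonance-free problem from a general normal-form computation, where small divisors would obstruct convergence.
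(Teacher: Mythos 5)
The paper itself does not prove Theorem \ref{PL}; it quotes it as a classical result and refers to the literature, so your attempt must be measured against the standard proof. Up to the last step, your sketch follows that standard line and is essentially correct: the decomposition $X=X_1+\widehat{X}$, the diagonalization of $X_1$ on monomials $z^a\bar z^b$ with eigenvalues $i(a-b)$, the one-dimensional kernel and cokernel spanned by $(x^2+y^2)^k$ in even degrees, the resulting obstructions $V_k$ with $\dot H=\sum_k V_k(x^2+y^2)^k$, and the truncated-Lyapunov-function argument showing that the first nonvanishing $V_{k_0}$ forces a focus are all the classical derivation of the focus quantities; the Morse-lemma/oval argument for the easy direction is also fine.

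The genuine gap is the final convergence step, and it fails for two reasons. First, your estimate forgets the derivative loss: $R_p$ is built from $P_n\,\partial_x H_q+Q_n\,\partial_y H_q$, and $\|\partial_x H_q\|_{\ell^1}\le q\,\|H_q\|_{\ell^1}$, so even with $\|X_1^{-1}\|\le 1$ the recursion reads $h_p\le C\,p\,(h_{p-1}+\cdots+h_{p-n+1})$, which yields factorial-type, not geometric, bounds. Your closing remark is in fact backwards: in the Poincar\'e domain the divisors \emph{grow} linearly with the degree, and that growth is what absorbs the derivative loss; here the divisors $|a-b|$ are merely bounded below by $1$ (and equal to $1$ on near-diagonal monomials at every degree), so there is no compensation --- this is a Siegel-boundary, resonant-type situation. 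Second, no normalization-blind estimate of this kind can be true: if $H^*$ is an analytic first integral and $\phi(h)=h+\sum_{k\ge 2}a_k h^k$ is a divergent formal series, then $\phi(H^*)$ is a formal first integral of exactly the prescribed form and is divergent (restrict to $y=0$ and invert the analytic germ to recover $\phi$). This is precisely why the theorem asserts that a formal first integral implies the \emph{existence} of an analytic one, not that the formal series converges. The standard repair, which is the route in the cited references, is dynamical rather than majorant-based: all $V_k=0$ forces the analytic Poincar\'e (half-)return map on a transversal to have identity Taylor series, hence to be the identity by analyticity, so the origin is a center; one then constructs an analytic first integral geometrically from the period annulus (e.g.\ assigning to each point the value of $r^2$ where its orbit meets the transversal, via the implicit function theorem and analytic dependence on initial conditions, or via Moussu's analytic orbital linearization). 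Your argument needs this detour; as written, the passage from formal to analytic does not close.
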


\subsection{Poincar\'e compactification for vector fields in the plane}\label{sec:compac}

To analyze the global dynamics of a planar polynomial differential system \(X = (P, Q)\), it is essential to classify the local phase portraits of its finite and infinite equilibrium points within the Poincar\'e disc.

Let \(\mathbb{S}^2 = \{\mathbf{z} \in \mathbb{R}^3 : ||\mathbf{z}|| = 1\}\) be the unit sphere in \(\mathbb{R}^3\). The polynomial vector field \(X\) induces an analytic vector field on \(\mathbb{S}^2\), denoted as \(p(X)\) (see, for example, \cite[Chapter 5]{MR2256001} or \cite{10.2307/2001320}).

The vector field \(p(X)\) facilitates the examination of the dynamics of \(X\) in the vicinity of infinity, specifically near the equator \(\mathbb{S}^1 = \{\mathbf{z} \in \mathbb{S}^2 : z_3 = 0\}\).

We treat the sphere as a smooth manifold to derive the analytical expression for \(p(X)\). We utilize six local charts defined as \(U_i = \{\mathbf{z} \in \mathbb{S}^2 : z_i > 0\}\) and \(V_i = \{\mathbf{z} \in \mathbb{S}^2 : z_i < 0\}\) for \(i = 1, 2, 3\). The corresponding coordinate maps \(\varphi_i : U_i \rightarrow \mathbb{R}^2\) and \(\psi_i : V_i \rightarrow \mathbb{R}^2\) are given by \(\varphi_k(\mathbf{z}) = \psi_k(\mathbf{z}) = \left(\frac{z_m}{z_k}, \frac{z_n}{z_k}\right)\) for \(m < n\) and \(m, n \ne k\).

Let \((u, v)\) be the local coordinates on \(U_i\) and \(V_i\) for \(i = 1, 2, 3\). According to \cite[Chapter 5]{MR2256001}, the vector field \(p(X)\) in these local charts can be expressed as:
\begin{equation}\label{poincare_comp}
\begin{array}{rl}
(\dot{u}, \dot{v}) =& \left(v^n \left(-u P\left(\frac{1}{v}, \frac{u}{v}\right) + Q\left(\frac{1}{v}, \frac{u}{v}\right)\right), -v^{n+1} P\left(\frac{1}{v}, \frac{u}{v}\right)\right) \quad \text{in } U_1; \vspace{0.3cm} \\
(\dot{u}, \dot{v}) =& \left(v^n \left(-u Q\left(\frac{u}{v}, \frac{1}{v}\right) + P\left(\frac{u}{v}, \frac{1}{v}\right)\right), -v^{n+1} Q\left(\frac{u}{v}, \frac{1}{v}\right)\right) \quad \text{in } U_2; \vspace{0.3cm} \\
(\dot{u}, \dot{v}) =& \left(P(u,v), Q(u,v)\right) \quad \text{in } U_3,
\end{array}
\end{equation}
where \(n\) denotes the degree of the polynomial vector field \(X\). We recall that the expressions for the vector field \(p(X)\) in the local chart \((V_i, \psi_i)\) are identical to those in the local chart \((U_i, \varphi_i)\) multiplied by \((-1)^{n-1}\) for \(i = 1, 2, 3\).

The points at infinity in all local charts take the form \((u, 0)\). The infinity \(\mathbb{S}^1\) remains invariant under the flow of \(p(X)\).

The equilibrium points of the vector field \(X\) are referred to as the {\it finite} equilibrium points of \(X\) or of \(p(X)\), while the equilibrium points of the vector field \(p(X)\) in \(\mathbb{S}^1\) are termed the {\it infinite} equilibrium points of \(X\) or of \(p(X)\).

\subsection{Characterization of global centers}\label{sec:globalc}

The following result provides the necessary and sufficient conditions for a planar polynomial differential system to exhibit a global center. The proof of this result can be found in \cite{doi:10.1080/14689367.2023.2228737}.

\begin{prop}\label{prop_main}
A polynomial differential system of degree \(n\) in \(\mathbb{R}^2\) that does not possess a line of equilibrium points at infinity has a global center if, and only if, it has a unique finite equilibrium point that is a center, and all the local phase portraits of the infinite equilibrium points (if they exist) are comprised of two hyperbolic sectors, with their two separatrices located on the infinite circle.
\end{prop}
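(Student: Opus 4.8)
The plan is to carry out the entire argument inside the Poincaré compactification $p(X)$ on the closed Poincaré disc $\mathbb{D}$, whose boundary $\mathbb{S}^1$ (the circle at infinity) is invariant, and to read off the global orbit structure from the Poincaré–Bendixson theorem together with the analyticity of the Poincaré return map. Since the system is polynomial and has no line of equilibria at infinity, the infinite equilibria on $\mathbb{S}^1$ are isolated and the complementary arcs of $\mathbb{S}^1$ are regular orbits; these two facts will be used repeatedly.

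For the necessity ($\Rightarrow$), assume $p$ is a global center. Uniqueness of the finite equilibrium and the center property are immediate, since the period annulus being $\mathbb{R}^2\setminus\{p\}$ forbids a second finite equilibrium. The content is the local portrait at each infinite equilibrium $q$. The periodic orbits around $p$ foliate $\mathbb{R}^2\setminus\{p\}$ and, viewed in $\mathbb{D}$, accumulate onto $\mathbb{S}^1$; hence every interior orbit near $q$ is an arc of such a closed orbit that approaches $\mathbb{S}^1$, travels alongside it, and recedes, so that $q$ is never the $\alpha$- or $\omega$-limit of an interior orbit. I would then exclude the remaining sector types: an elliptic sector would force interior orbits homoclinic to $q$ (escaping to infinity and returning), and a parabolic or attracting/repelling sector would force interior orbits converging to $q$; both produce non-closed orbits and contradict the global center. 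Only hyperbolic sectors survive, their separatrices must lie on $\mathbb{S}^1$, and accounting for the two hemispheres of the sphere yields exactly the configuration of two hyperbolic sectors claimed.

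For the sufficiency ($\Leftarrow$), assume $p$ is the unique finite equilibrium and a center and that every infinite equilibrium has the stated configuration. The plan is to use the rigidity of analyticity. Fix an analytic transversal ray $\Sigma$ emanating from $p$ and let $\Pi$ be the associated Poincaré return map. Because $p$ is a center, $\Pi=\mathrm{id}$ on an initial segment of $\Sigma$; since $\Pi$ is analytic on its domain, the identity theorem gives $\Pi=\mathrm{id}$ wherever it is defined. It then remains to show that $\Pi$ is defined along all of $\Sigma$ out to $\mathbb{S}^1$, i.e. that every orbit meeting $\Sigma$ returns to it. This is where the infinity hypothesis enters: the limit set of any interior orbit is, by Poincaré–Bendixson, an equilibrium, a periodic orbit, or a polycycle; it cannot be $p$ (a center), nor an infinite equilibrium (hyperbolic sector, whose vertex is not a limit of interior orbits), and the only admissible polycycle is $\mathbb{S}^1$ itself, since the sole connecting orbits at the infinite equilibria are the arcs of $\mathbb{S}^1$. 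Consequently no orbit escapes to infinity in finite forward or backward passage, the return map stays defined up to infinity, and $\Pi=\mathrm{id}$ forces every orbit to be closed, i.e. a global center.

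I expect the main obstacle to be precisely this last step of the sufficiency argument: establishing that the return map extends all the way to $\mathbb{S}^1$, which amounts to ruling out interior orbits that spiral toward the polycycle $\mathbb{S}^1$ (or toward a would-be outer limit cycle) rather than returning to $\Sigma$. The delicate point is that the boundary of the central period annulus could a priori be a one-sided limiting object; the resolution combines the analyticity of $\Pi$ (which rigidifies the identity obtained near $p$) with the sector analysis at infinity (which guarantees that $\mathbb{S}^1$ is a non-attracting, non-repelling polycycle with no interior connection to its vertices). Making the interplay between these two ingredients rigorous, so that no neighbourhood of $\mathbb{S}^1$ can contain a non-closed orbit, is the crux of the proof.
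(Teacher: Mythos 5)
First, a point of comparison: the paper does not prove Proposition~\ref{prop_main} at all --- it imports the statement from \cite{doi:10.1080/14689367.2023.2228737} --- so there is no internal proof to measure you against; your attempt has to stand on its own. Your necessity direction does stand: periodic orbits are compact, so no interior orbit can tend to an infinite equilibrium $q$; elliptic and parabolic sectors contain interior orbits tending to $q$ and are therefore excluded; the separatrices of the surviving hyperbolic sectors tend to $q$ and hence must be the two adjacent arcs of $\mathbb{S}^1$, which leaves exactly two hyperbolic sectors. That is correct and complete.

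The sufficiency direction, however, has a genuine gap, and it sits exactly where you say it does. The hypothesis that every infinite equilibrium has two hyperbolic sectors with separatrices on $\mathbb{S}^1$ rules out interior orbits \emph{tending to} an infinite equilibrium, and it forces any limit polycycle containing infinite equilibria to be $\mathbb{S}^1$ itself; but it does \emph{not} ``guarantee that $\mathbb{S}^1$ is a non-attracting, non-repelling polycycle,'' as you assert. Orbits in a hyperbolic sector merely pass by the vertex, so an interior orbit can perfectly well spiral toward $\mathbb{S}^1$, visiting a neighbourhood of each infinite equilibrium on every turn; whether $\mathbb{S}^1$ attracts from the inside is governed by the transition (Dulac) maps of those sectors, not by their mere existence. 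Your return-map argument also presupposes that the domain of $\Pi$ is a single interval reaching $\mathbb{S}^1$, which is part of what must be proved. To close the argument one should work with the period annulus $A$ of $p$ directly: $A$ is open (if a periodic orbit lay on $\partial A$, the analytic return map would have fixed points accumulating there, hence be the identity nearby), so if $A\neq\mathbb{R}^2\setminus\{p\}$ there is a finite non-equilibrium point $z\in\partial A$, and the orbit through $z$ lies in the invariant set $\partial A$. By Poincar\'e--Bendixson in the closed disc, each of $\alpha(z)$, $\omega(z)$ is $\{p\}$ (impossible, since entering the annulus $A$ would force $z\in A$), a finite periodic orbit $\gamma\subset\overline{A}$ (impossible: analyticity puts $\gamma$ inside $A$, and the spiralling orbit of $z$ would then enter the invariant set $A$), or --- using your sector hypothesis --- the polycycle $\mathbb{S}^1$. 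The last case is eliminated not by the sectors but by the monotonicity of successive intersections of an orbit with a transversal: if both $\alpha(z)$ and $\omega(z)$ equalled $\mathbb{S}^1$, the bi-infinite monotone sequence of crossings of a small interior transversal near a regular point of $\mathbb{S}^1$ would have both tails converging to the same boundary point, forcing the orbit to be periodic on $\mathbb{S}^1$, a contradiction. Without some argument of this kind the implication ``$\Pi=\mathrm{id}$ near $p$ $\Rightarrow$ global center'' does not go through, so as written your proof establishes only one direction of the proposition.
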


In \cite{GalVill}, the authors proved that polynomial differential systems of even degree cannot have global centers, as such systems necessarily possess orbits that escape to infinity in both forward and backward time. This result is summarized in the following theorem.

\begin{thm}\label{evendeg}
Consider a polynomial differential system of the form $\dot{x} = P(x, y)$, $\dot{y} = Q(x, y)$. If the system has an even degree, then it cannot admit a global center.
\end{thm}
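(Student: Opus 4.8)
The plan is to argue by contradiction, combining Proposition~\ref{prop_main} with a careful analysis of the flow at infinity via the Poincar\'e compactification. Suppose that an even-degree system $\dot{x}=P(x,y)$, $\dot{y}=Q(x,y)$ of degree $n$ admits a global center. By Proposition~\ref{prop_main} the system then has no line of equilibria at infinity, its unique finite equilibrium is a center, and every infinite equilibrium must consist of exactly two hyperbolic sectors whose separatrices lie on the equator $\mathbb{S}^1$. In particular, no orbit of the finite system can tend to infinity in forward or in backward time, since all orbits of a global center are bounded periodic orbits. The strategy is to contradict precisely this last feature by exhibiting an orbit that escapes to infinity.

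Write $P=P_n+(\text{lower order})$ and $Q=Q_n+(\text{lower order})$, where $P_n,Q_n$ are the homogeneous parts of degree $n$. Two homogeneous forms of degree $n+1$ control the dynamics near infinity: the tangential form $G=xQ_n-yP_n$, whose real linear factors give the directions of the infinite equilibria (equivalently the zeros of $\dot{u}$ at $v=0$ in \eqref{poincare_comp}), and the radial form $H=xP_n+yQ_n$, which governs the sign of $\tfrac{1}{2}\tfrac{d}{dt}(x^2+y^2)$ for large $(x,y)$ and hence whether nearby orbits move toward or away from infinity. A direct computation in the chart $U_1$ shows that at an infinite equilibrium in the direction $(1,u_0)$ one has $\dot{v}\approx -v\,P_n(1,u_0)$ together with $H(1,u_0)=(1+u_0^2)\,P_n(1,u_0)$, so the transverse (radial) eigenvalue has the sign of $H$ evaluated in that direction.

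First I would record the parity facts for even $n$. Since $n+1$ is odd, $G$ is a nonzero homogeneous form of odd degree (nonzero because there is no line of equilibria at infinity), hence it has a real root; thus at least one pair of antipodal infinite equilibria $q,-q$ exists. Moreover $H(-\mathbf{z})=(-1)^{n+1}H(\mathbf{z})=-H(\mathbf{z})$, so the sign of the radial eigenvalue at $-q$ is opposite to the one at $q$. Consequently, whenever $H(q)\neq 0$, one of the two antipodal equilibria is radially attracting and the other radially repelling from the finite region: at one of them there is an orbit of the finite system whose $\omega$- or $\alpha$-limit lies at infinity. Such an orbit leaves every compact set, contradicting the existence of a global center, and, equivalently, showing that this infinite equilibrium cannot be of the two-hyperbolic-sector type with equatorial separatrices required by Proposition~\ref{prop_main}.

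The main obstacle is the degenerate set of directions where $H(q)=0$, that is, where $P_n$ and $Q_n$ vanish simultaneously (including the fully degenerate case $H\equiv 0$, in which $P_n=-yR$, $Q_n=xR$ for a homogeneous $R$ of odd degree $n-1$ and the leading part is purely rotational). There the leading-order eigenvalue computation is inconclusive and the sign-flip argument above does not apply directly. To close these cases I would resolve the local phase portrait of the corresponding infinite equilibria using the quasi-homogeneous blow-up and Newton diagram techniques recalled in Subsection~\ref{sec:vertical}, tracking the first nonvanishing homogeneous terms; the aim is to show that in every such degenerate configuration either the equilibrium fails to be of the two-hyperbolic-sector/equatorial-separatrix type, or an elliptic or parabolic sector appears through which finite orbits reach infinity. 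Either conclusion again contradicts Proposition~\ref{prop_main}, completing the proof that no even-degree system can have a global center.
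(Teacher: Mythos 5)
First, a point of comparison: the paper does not prove Theorem \ref{evendeg} at all --- it is imported from \cite{GalVill} --- so your attempt must stand on its own. It does not, and the gap is exactly where you flag ``the main obstacle''. Your radial computation correctly disposes of any infinite equilibrium at which $H=xP_n+yQ_n$ is nonzero (a nonzero transverse eigenvalue forces an invariant manifold transverse to the equator, hence a finite orbit escaping to infinity). But the case where $H$ vanishes at every infinite equilibrium is not a peripheral degeneracy: it is precisely the configuration in which global centers actually occur. Indeed, for the odd-degree systems of this very paper one has $P_n\equiv 0$, so $H=yQ_n$ vanishes identically on the unique infinite direction $(0,\pm1)$, and the equilibrium there \emph{is} of the two-hyperbolic-sector type. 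Consequently, no purely local statement of the form ``a degenerate infinite equilibrium with $H(q)=0$ cannot have two hyperbolic sectors with equatorial separatrices'' can be true; any proof of the degenerate case must use the parity of the degree globally, and your plan (``resolve by quasi-homogeneous blow-up \dots the aim is to show that \dots'') neither does so nor reduces to a finite list of configurations. As stated it is a program, not a proof. A secondary slip: you derive ``no line of equilibria at infinity'' \emph{from} Proposition \ref{prop_main}, but that is a hypothesis of the proposition, not a conclusion; the case $xQ_n-yP_n\equiv 0$ must be excluded separately.

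The missing idea is that for even degree the contradiction comes from the \emph{tangential} form $G=xQ_n-yP_n$, not the radial one. On the equator the angular component of $p(X)$ at the direction $(\cos\theta,\sin\theta)$ has the sign of $G(\cos\theta,\sin\theta)$. Since $n$ is even, $G$ is homogeneous of odd degree $n+1$, so $G(-\mathbf{z})=-G(\mathbf{z})$; if $G\not\equiv 0$ it therefore takes both signs on the circle and has a sign-changing zero $\theta_0$. At the corresponding infinite equilibrium the two adjacent equatorial arcs both flow into it (or both out of it), i.e.\ its two equatorial separatrices are both stable (or both unstable). This is incompatible with the local phase portrait being two hyperbolic sectors whose two separatrices lie on the infinite circle, because a hyperbolic sector must be bounded by one stable and one unstable separatrix. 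Proposition \ref{prop_main} then forbids a global center. This argument requires no information about the radial eigenvalue, applies uniformly to arbitrarily degenerate infinite equilibria, and is where the evenness of the degree genuinely enters --- which is why it closes the case your $H$-based argument cannot.
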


\subsection{Quasi-homogeneous blow-up}\label{sec:vertical}

Assume that the origin is a singular point of a smooth vector field \( X \) defined on \( \mathbb{R}^2 \). We consider the transformation:
\begin{equation*}
\phi : \mathbb{S}^1 \times \mathbb{R} \to \mathbb{R}^2, \quad 
(\theta, \rho) \mapsto \left( \rho^{\alpha} \cos \theta,\, \rho^{\beta} \sin \theta \right),
\end{equation*}
where \( \alpha, \beta \in \mathbb{N} \), with \( \alpha, \beta \geq 1 \), are appropriately chosen. In the particular case \( (\alpha, \beta) = (1,1) \), this corresponds to the standard homogeneous case. This mapping allows us to lift \( X \) to a smooth vector field \( \tilde{X} \) on \( \mathbb{S}^1 \times \mathbb{R} \) via \( \phi_*(\tilde{X}) = X \). 

To regularize the behavior near the origin, we divide \( \tilde{X} \) by \( \rho^k \) for some \( k \geq 1 \), yielding a new smooth field \( \overline{X} = \dfrac{1}{\rho^k} \tilde{X} \), which remains as regular as possible along the circle \( \mathbb{S}^1 \times \{0\} \).

Directional blow-ups are then introduced as follows:
\begin{align*}
\text{Positive } x\text{-axis:} \quad & (\bar{x}, \bar{y}) \mapsto (\bar{x}^{\alpha}, \bar{x}^{\beta} \bar{y}) \quad \quad \Rightarrow \hat{X}^{x}_{+}, \\
\text{Negative } x\text{-axis:} \quad & (\bar{x}, \bar{y}) \mapsto (-\bar{x}^{\alpha}, \bar{x}^{\beta} \bar{y}) \,\quad \Rightarrow \hat{X}^{x}_{-}, \\
\text{Positive } y\text{-axis:} \quad & (\bar{x}, \bar{y}) \mapsto (\bar{x} \bar{y}^{\alpha}, \bar{y}^{\beta}) \quad \quad\Rightarrow \hat{X}^{y}_{+}, \\
\text{Negative } y\text{-axis:} \quad & (\bar{x}, \bar{y}) \mapsto (\bar{x} \bar{y}^{\alpha}, -\bar{y}^{\beta}) \,\quad \Rightarrow \hat{X}^{y}_{-}.
\end{align*}

These blow-ups also induce rescaled systems \( \overline{X}^{x}_{\pm} \) and \( \overline{X}^{y}_{\pm} \), obtained by dividing the corresponding fields by appropriate powers of \( \bar{x} \) or \( \bar{y} \), depending on the direction.

When \( \alpha \) (respectively \( \beta \)) is odd, the analysis along the positive \( x \)-axis (respectively \( y \)-axis) reflects the behavior on the negative side as well.

One natural question is how to determine suitable values for the pair \((\alpha, \beta)\) when performing a quasi-homogeneous blow-up. A useful tool for this purpose is the \textit{Newton diagram}, which we now proceed to define.

Let
\begin{equation*}
X = P(x, y) \frac{\partial}{\partial x} + Q(x, y) \frac{\partial}{\partial y}
\end{equation*}
be a polynomial vector field with an isolated critical point at the origin. Assume that
\begin{equation*}
P(x, y) = \sum_{i + j \geq 1} p_{ij} x^i y^j, \qquad Q(x, y) = \sum_{i + j \geq 1} q_{ij} x^i y^j.
\end{equation*}
We define the \textit{support} of \( X \) as the subset
\begin{equation*}
S = \{ (i - 1, j) \mid p_{ij} \neq 0 \} \cup \{ (i, j - 1) \mid q_{ij} \neq 0 \} \subset \mathbb{R}^2.
\end{equation*}
The \textit{Newton polygon} associated with \( X \) is given by the convex hull \( \Gamma \) of the union:
\begin{equation*}
P = \bigcup_{(r,s) \in S} \{ (\bar{r}, \bar{s}) \in \mathbb{R}^2 \mid \bar{r} \geq r,\, \bar{s} \geq s \}.
\end{equation*}
The \textit{Newton diagram} associated with \( X \) consists of the union \( \gamma \) of the compact faces \( \gamma_k \) forming the Newton polygon \( \Gamma \), which we enumerate from the left to the right.

For further details on quasi-homogeneous blow-up and Newton diagram, refer to \cite[section 3.3]{MR2256001}.

\subsection{Bendixson-Dulac Criterion}\label{sec:ben_crit}
 In what follows, we present a fundamental result that allows us to establish the non-existence of closed orbits for a \(\mathbb{C}^1\) planar differential system of the form
\begin{equation}\label{Ben_C}
  \dot{x} = P(x, y), \qquad \dot{y} = Q(x, y),
\end{equation}
where \(P\) and \(Q\) are continuously differentiable functions defined on \(\mathbb{R}^2\).

The \textit{Bendixson criterion} (see \cite{BC1} and \cite{MR2256001}) states the following: Let \(D\) be a simply connected domain in \(\mathbb{R}^2\). If the divergence of the system,
\begin{equation*}
\frac{\partial P}{\partial x} + \frac{\partial Q}{\partial y},
\end{equation*}
has a constant sign in \(D\) (i.e., it is either always non-positive or always non-negative), except possibly on a subset of \(D\) with zero Lebesgue measure, then the differential system \eqref{Ben_C} has no closed orbits entirely contained in \(D\).

This criterion was later generalized by \textit{H. Dulac} (see \cite{MR2256001}) as follows: Let \(D\) be a simply connected domain in \(\mathbb{R}^2\), and let \(f(x, y)\) be a \(\mathbb{C}^1\) function defined on \(D\). If the expression
\begin{equation*}
\frac{\partial (fP)}{\partial x} + \frac{\partial (fQ)}{\partial y}
\end{equation*}
has a constant sign in \(D\), except possibly on a subset of \(D\) with zero Lebesgue measure, then the differential system \eqref{Ben_C} has no closed orbits entirely contained in \(D\).

\subsection{Newtonian system}\label{sec:newtonian} Let us start by introducing the Hamiltonian systems.
\begin{defn}
Let \(E\) be an open subset of \(\mathbb{R}^{2},\) and consider a function \(H \in C^2(E)\) where \(H = H(x,y)\) with \(x, y \in \mathbb{R}.\) A system of differential equations of the form
\begin{eqnarray*}
\dot{x} = \frac{\partial H}{\partial y}, \qquad \dot{y} = -\frac{\partial H}{\partial x}, 
\end{eqnarray*}
is known as a Hamiltonian system \(1\) degree of freedom on \(E.\)
\end{defn}
Hamiltonian systems are inherently conservative, meaning that the Hamiltonian function (or total energy) \(H(x, y)\) remains constant along the trajectories of the system.

The \textit{Newtonian system} is a specific example of a Hamiltonian system with one degree of freedom which is  given by $\ddot{x} = f(x), $ where \(f \in C^1(a,b).\) This second-order differential equation can be reformulated as a first-order system in \(\mathbb{R}^2\)
\begin{eqnarray}\label{ns}
 \dot{x} = y, \qquad \dot{y} = f(x). 
\end{eqnarray}
The total energy for this system is given by \( H(x,y) = T(y) + U(x) \), where \( T(y) = \frac{y^2}{2} \) represents the kinetic energy, and \( U(x) = - \int_{x_0}^{x} f(s) ds \)
defines the potential energy. With this formulation of \(H(x,y),\) the Newtonian system \eqref{ns} can be expressed as a Hamiltonian system.
The following theorem describes the equilibrium points of the Newtonian system.
\begin{thm}\cite[Section 2.14]{perko}\label{maxmin}
The critical points of the Newtonian system \eqref{ns} are located on the \(x\)-axis. 
A point \((x_0,0)\) is a critical point of \eqref{ns} if, and only if, it is a critical point of the function \(U(x),\) meaning it is a root of \(f(x).\) Furthermore:
\begin{itemize} 
\item If \((x_0,0)\) is a strict local maximum of the analytic function \(U(x),\) it corresponds to a saddle point of \eqref{ns}.
\item If \((x_0,0)\) is a strict local minimum \(U(x),\) it represents a center of \eqref{ns}.
\item If \((x_0,0)\) is a horizontal inflection point of \(U(x),\) it forms a cusp in the phase portrait of \eqref{ns}.
\item The phase portrait of \eqref{ns} is symmetric with respect to the \(x\)-axis. 
\end{itemize}
\end{thm}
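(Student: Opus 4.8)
The plan is to exploit the Hamiltonian structure of \eqref{ns}. Writing $H(x,y)=\tfrac{y^2}{2}+U(x)$ with $U'(x)=-f(x)$, one checks immediately that $\dot H = U'(x)\dot x + y\,\dot y = -f(x)\,y + y\,f(x)\equiv 0$, so $H$ is a first integral and every orbit of \eqref{ns} lies in a level curve $\{H=h\}$. A point $(x_0,y_0)$ is an equilibrium precisely when $y_0=0$ and $f(x_0)=0$; since $U'=-f$, this says $(x_0,0)$ lies on the $x$-axis and $x_0$ is a critical point of $U$, which settles the first assertion. The symmetry statement is equally immediate: because $H(x,-y)=H(x,y)$, every level curve is invariant under the reflection $(x,y)\mapsto(x,-y)$; equivalently, \eqref{ns} is reversible under $(x,y,t)\mapsto(x,-y,-t)$, so the phase portrait is symmetric with respect to the $x$-axis.

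For the local classification I would first dispose of the nondegenerate cases by linearization. The Jacobian of \eqref{ns} at $(x_0,0)$ is $\left(\begin{smallmatrix}0&1\\ f'(x_0)&0\end{smallmatrix}\right)$, with eigenvalues $\pm\sqrt{f'(x_0)}=\pm\sqrt{-U''(x_0)}$. When $U''(x_0)<0$ (a nondegenerate maximum) the eigenvalues are real of opposite sign, giving a saddle; when $U''(x_0)>0$ (a nondegenerate minimum) they are purely imaginary, and since the system is Hamiltonian the equilibrium is a center rather than a focus. The substance of the theorem, however, lies in the degenerate situations in which $U''(x_0)=0$ yet $x_0$ is still a strict extremum or a horizontal inflection; there linearization gives no information and the analyticity of $U$ becomes essential, so I would argue directly from the geometry of the level curves $y=\pm\sqrt{2(h-U(x))}$.

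I would then treat each case by describing these level sets near $(x_0,0)$. If $x_0$ is a strict local minimum with $c=U(x_0)$, then for each $h$ slightly above $c$ the set $\{x: U(x)\le h\}$ is, near $x_0$, a closed interval $[a(h),b(h)]$ with $a<x_0<b$ and $U(a)=U(b)=h$; the curve $y=\pm\sqrt{2(h-U(x))}$ over $[a,b]$ is a simple closed loop shrinking to $(x_0,0)$ as $h\to c^+$. Because $U$ is analytic its critical points are isolated, so these loops contain no further equilibria and the flow is transverse to the $x$-axis at the turning points $a,b$, where $f=-U'\ne0$; hence each loop is a genuine periodic orbit and $(x_0,0)$ is a center. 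If $x_0$ is a strict local maximum, then $U(x)<c$ for $x\neq x_0$ near $x_0$, so the level $h=c$ consists of two arcs crossing transversally at $(x_0,0)$ (the separatrices), while the nearby levels $h<c$ and $h>c$ split into hyperbola-type branches, giving the topological saddle. If $x_0$ is a horizontal inflection, $U-c$ changes sign across $x_0$, so the level $h=c$ exists on only one side of $x_0$; expanding $U(x)-c$ to its first nonvanishing term, of odd order $k\ge 3$, yields $y\sim\pm C\,|x-x_0|^{k/2}$ with $k/2>1$, so the two branches meet tangentially to the $x$-axis, producing a cusp. The main obstacle is precisely this degenerate analysis: I must ensure, using analyticity, namely the isolation of the critical points and the sign pattern of $U-c$ read off from its Taylor expansion, that the level curves genuinely organize into shrinking ovals, crossing separatrices, or a cusp, since the eigenvalue test is silent in all three borderline configurations.
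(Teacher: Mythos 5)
The paper offers no proof of this theorem: it is imported verbatim from Perko, Section 2.14, so there is nothing internal to compare against. Your argument is the standard one and is essentially the proof given in that reference: conservation of $H(x,y)=\tfrac{y^2}{2}+U(x)$ forces every orbit onto a level curve, linearization settles the nondegenerate extrema (with the Hamiltonian structure ruling out a focus when the eigenvalues are purely imaginary), and the degenerate cases are read off from the geometry of $y=\pm\sqrt{2\,(h-U(x))}$ together with the isolation of the critical points of the analytic function $U$. The one slip is in the degenerate strict maximum: if $U(x)-c=-a(x-x_0)^{2k}+\cdots$ with $k\ge 2$ and $a>0$, the two branches of the critical level $\{H=c\}$ are $y=\pm\sqrt{2a}\,(x-x_0)^{k}+\cdots$, which meet the $x$-axis \emph{tangentially}, not transversally as you assert; the crossing is transversal only when $U''(x_0)<0$, a case you had already disposed of by linearization. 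This does not damage the conclusion --- your description of how the levels $h<c$ and $h>c$ split into branches still yields four separatrices and four hyperbolic sectors, hence a topological saddle --- but the word ``transversally'' should be removed. A second, purely expository point: for the cusp you describe only the critical level set; to identify the equilibrium as a cusp of the \emph{flow} one should add that the nearby regular levels fold around the two tangential branches, producing exactly two hyperbolic sectors. Neither remark is a logical gap.
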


\section{Proof of Theorem \ref{1}}\label{main_A}
In this section, we focus on proving Theorem \ref{1}. Due to the length of the proof, we have divided it into two subsections. Section \ref{sec1} is dedicated to analyzing the local dynamics of Generalized Duffing Oscillators, while Section \ref{GB} examines the global dynamics of the system.

\subsection{Finite equilibrium points}\label{sec1}

In what follows, we analyze the equilibrium points of the Generalized Duffing Oscillator system \eqref{gdo}. The existence and stability of these equilibria play a crucial role in understanding the dynamics of the system. We begin by deriving the conditions under which the system has equilibrium points and discussing their stability for different values of the parameters \( \alpha \), \( \sigma \), and \( \epsilon \). The following proposition summarizes the main results regarding the finite equilibrium points of the system.

\begin{prop}\label{prop_sing} 
Consider system \eqref{gdo}. Then the following statements hold.
\begin{itemize}
    \item[(i)] 
    If $ m = 1 $, the generalized Duffing oscillator \eqref{gdo} has a unique equilibrium at the origin $(0, 0)$. Its stability is determined as follows:
    \begin{itemize}
        \item[$\bullet$] For $ \epsilon + \sigma < 0 $, the origin is a saddle point.
        \item[$\bullet$] For $ \epsilon + \sigma > 0 $:
        \begin{itemize}
            \item[$\triangleright$] When $ \alpha > 0 $ (resp. $ \alpha < 0 $), the origin is a stable (resp. unstable) equilibrium point. Furthermore, it is a focus (resp. node) if $ \alpha^2 < 4(\epsilon + \sigma) $ (resp. $ \alpha^2 > 4(\epsilon + \sigma) $).
            \item[$\triangleright$]  When $ \alpha = 0 $, the origin is a center.
        \end{itemize}
    \end{itemize}

    \item[(ii)] 
    If $ m > 1 $, the generalized Duffing oscillator \eqref{gdo} has at most three equilibria
    \begin{eqnarray}\label{eqneeed}
        (0, 0) \quad \text{and} \quad E^{\pm} = \left( \pm \sqrt[m-1]{-\frac{\sigma}{\epsilon}}, 0 \right).
    \end{eqnarray}
    Even more,
    \begin{itemize}
        \item[$\bullet$] if $ m $ is odd, $ E^{\pm} $ exist only when $ \sigma $ and $ \epsilon $ have opposite signs.
        \item[$\bullet$] if $ m $ is even, $E^{+}$ always exists.
    \end{itemize}

    \item[(iii)] 
    The stability of the equilibria for $ m > 1 $ is summarized in Table \eqref{tab1} for odd $ m $ and Table \eqref{tab2} for even $ m $.
\end{itemize}
\end{prop}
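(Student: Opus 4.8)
The plan is to prove Proposition \ref{prop_sing} by direct computation, treating the cases $m=1$ and $m>1$ separately, since the equilibrium structure changes fundamentally between them. For the finite equilibria in both cases, I would set $\dot{x}=\dot{y}=0$ in \eqref{gdo}. The first equation $\dot{x}=y=0$ forces $y=0$, and substituting into $\dot{y}=-\alpha y - \epsilon x^m - \sigma x = 0$ reduces the problem to solving $\epsilon x^m + \sigma x = 0$, i.e. $x(\epsilon x^{m-1}+\sigma)=0$. For $m=1$ this becomes $(\epsilon+\sigma)x=0$, which under the standing hypothesis $\epsilon \neq 0$ (and whenever $\epsilon+\sigma \neq 0$) has the unique root $x=0$, giving the single equilibrium at the origin. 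For $m>1$, besides $x=0$ one solves $x^{m-1}=-\sigma/\epsilon$, and the existence of real roots depends on the parity of $m-1$ and the sign of $-\sigma/\epsilon$: this yields exactly the claimed points $E^{\pm}=(\pm\sqrt[m-1]{-\sigma/\epsilon},0)$ together with the parity discussion for the existence of $E^{\pm}$.

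For part (i), the stability analysis at the origin for $m=1$ is a routine linearization. The system is linear, with Jacobian
\begin{equation*}
J = \begin{pmatrix} 0 & 1 \\ -(\epsilon+\sigma) & -\alpha \end{pmatrix},
\end{equation*}
whose characteristic polynomial is $\lambda^2 + \alpha\lambda + (\epsilon+\sigma)=0$. I would read off the sign of the eigenvalues from the trace $-\alpha$ and determinant $\epsilon+\sigma$: when $\epsilon+\sigma<0$ the determinant is negative, giving real eigenvalues of opposite sign and hence a saddle; when $\epsilon+\sigma>0$ the sign of $\alpha$ controls stability (stable for $\alpha>0$, unstable for $\alpha<0$) and the discriminant $\alpha^2 - 4(\epsilon+\sigma)$ distinguishes focus from node. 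The delicate point is the claim that for $\alpha=0$ and $\epsilon+\sigma>0$ the origin is a center: here the eigenvalues are purely imaginary, so linearization is inconclusive. For this I would invoke the earlier machinery — either apply the Poincaré–Lyapunov Theorem \ref{PL} after rescaling to the normal form \eqref{eq_center}, or, more directly, observe that for $\alpha=0$ the system \eqref{gdo} is Newtonian of the form \eqref{ns} with $f(x)=-\epsilon x^m - \sigma x$, so Theorem \ref{maxmin} applies: the origin is a center precisely when the potential $U(x)=-\int_0^x f(s)\,ds$ has a strict local minimum there, which holds iff $\epsilon+\sigma>0$.

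For parts (ii) and (iii), once the equilibria are located I would compute the Jacobian of \eqref{gdo} at a general point,
\begin{equation*}
J(x,y) = \begin{pmatrix} 0 & 1 \\ -m\epsilon x^{m-1} - \sigma & -\alpha \end{pmatrix},
\end{equation*}
and evaluate it at the origin and at $E^{\pm}$. At the origin the determinant is $\sigma$ and the trace is $-\alpha$. At $E^{\pm}$ one has $\epsilon x^{m-1}=-\sigma$, so the lower-left entry becomes $-m\epsilon x^{m-1}-\sigma = m\sigma - \sigma = (m-1)\sigma$, making the determinant $-(m-1)\sigma$. From these expressions the classification into saddles, nodes, foci, and (for $\alpha=0$) centers follows from the trace/determinant/discriminant analysis, organized by the parity of $m$ and the signs of $\sigma,\epsilon$; the cleanest route is again to treat $\alpha=0$ via the Newtonian reduction \eqref{ns} and Theorem \ref{maxmin}, reading off maxima versus minima of $U(x)$ to decide saddle versus center. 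The main obstacle is not any single computation but the bookkeeping: assembling Tables \eqref{tab1} and \eqref{tab2} requires carefully tracking every sign combination of $\sigma$ and $\epsilon$ against the parity of $m$ and the sign of $\alpha$, and handling the degenerate thresholds (such as when $E^{\pm}$ merge with the origin as $\sigma\to 0$, or the center case $\alpha=0$ where linearization fails) without omission or overlap.
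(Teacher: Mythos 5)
Your proposal is correct and follows essentially the same route as the paper: locate the equilibria by factoring $x(\epsilon x^{m-1}+\sigma)=0$, then classify them via the Jacobian's trace, determinant, and discriminant, with the tables assembled by sign bookkeeping. The only slight difference is that for the $m=1$, $\alpha=0$ case you invoke Poincar\'e--Lyapunov or the Newtonian potential, whereas the paper simply notes the system is linear there so purely imaginary eigenvalues already give a center; your extra machinery is harmless but unnecessary.
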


\begin{small}
\centering
\begin{tabular}{cc}
    \begin{minipage}{.5\linewidth}
    \begin{tabular}{|c|c|c|c|c|c|c|}
\hline
\(\sigma\) & \(\epsilon\) & \(\alpha\) &\((0,0)\)& \(E^+\) & \(E^-\)  \\
\hline
\(+\) & \(+\) & \(+\) & \cellcolor{blue}\mbox{stable}   &--- &--- \\
\hline
\(+\) & \(+\) & \(-\)  & \cellcolor{red}\mbox{unstable}   &--- &---\\
\hline
 \(-\) & \(-\)   & \(+\) & \cellcolor{green}\mbox{saddle} &--- &--- \\
\hline
\(-\) & \(-\)   & \(-\)  & \cellcolor{green}\mbox{saddle}  &--- &--- \\
\hline
\(+\) & \(-\) & \(+\) & \cellcolor{blue}\mbox{stable}    &\cellcolor{green}\mbox{saddle} &\cellcolor{green}\mbox{saddle}\\
\hline  
\(+\) & \(-\) & \(-\) &\cellcolor{red}\mbox{unstable}     &\cellcolor{green}\mbox{saddle} &\cellcolor{green}\mbox{saddle}\\
\hline  
\(-\) & \(+\) & \(+\) &\cellcolor{green}\mbox{saddle} &\cellcolor{blue}\mbox{stable} &\cellcolor{blue}\mbox{stable}\\
\hline
\(-\) & \(+\) & \(-\) &\cellcolor{green}\mbox{saddle} &\cellcolor{red}\mbox{unstable} &\cellcolor{red}\mbox{unstable} \\
\hline
\end{tabular}
\captionof{table}{ \label{tab1} Type of equilibria when \(m\) is odd.}
\end{minipage} &

\begin{minipage}{.5\linewidth}
\begin{tabular}{|c|c|c|c|c|c|c|}
\hline
\(\sigma\) & \(\epsilon\) & \(\alpha\) &\((0,0)\)& \(E^+\)   \\
\hline
\(+\) & \(+\) & \(+\) & \cellcolor{blue}\mbox{stable}    &--- \\
\hline
\(+\) & \(+\) & \(-\)  & \cellcolor{red}\mbox{unstable} &---  \\
\hline
\(-\) & \(-\)   & \(+\) & \cellcolor{green}\mbox{saddle} &---  \\
\hline
\(-\) & \(-\)   & \(-\)  & \cellcolor{green}\mbox{saddle} &---  \\
\hline
\(+\) & \(-\) & \(+\) & \cellcolor{blue}\mbox{stable}  &\cellcolor{green}\mbox{saddle} \\
\hline  
\(+\) & \(-\) & \(-\) &\cellcolor{red}\mbox{unstable}    &\cellcolor{green}\mbox{saddle} \\
\hline  
\(-\) & \(+\) & \(+\) &\cellcolor{green}\mbox{saddle}  &\cellcolor{blue}\cellcolor{blue}\mbox{stable}  \\
\hline
\(-\) & \(+\) & \(-\) &\cellcolor{green}\mbox{saddle} &\cellcolor{red}\mbox{unstable}  \\
\hline
\end{tabular}
\captionof{table}{\label{tab2} Type of equilibria when \(m\) is even.}
\end{minipage} 
\end{tabular}
\end{small}

\begin{proof}
We analyze the equilibria and their stability for the Generalized Duffing Oscillator \eqref{gdo} by considering the cases $ m = 1 $ and $ m > 1 $ separately.

\noindent\textbf{Case 1: $ m = 1 $} \\
In this case, the system \eqref{gdo} reduces to:
\begin{equation}\label{m1}
\dot{x} = y, \quad
\dot{y} = -\alpha y - (\epsilon + \sigma) x.
\end{equation}
The unique equilibrium point is the origin. The eigenvalues of the Jacobian matrix at the origin are given by
$\lambda_{1,2} = -\frac{1}{2}\alpha \pm \frac{1}{2} \sqrt{\alpha^2 - 4(\epsilon + \sigma)}.$

\begin{itemize}
    \item For $ \epsilon + \sigma < 0 $, the eigenvalues are real and of opposite signs, implying that the origin is a saddle point.
    \item For $ \epsilon + \sigma > 0 $:
    \begin{itemize}
        \item[$\triangleright$] When $ \alpha > 0 $ (resp. $ \alpha < 0 $), the eigenvalues have negative (resp. positive) real parts, making the origin stable (resp. unstable). Furthermore,
        \begin{itemize}
            \item If $ \alpha^2 < 4(\epsilon + \sigma) $, the eigenvalues are complex conjugates, and the origin is a focus.
            \item If $ \alpha^2 > 4(\epsilon + \sigma) $, the eigenvalues are real and distinct, and the origin is a node.
        \end{itemize}
      \item[$\triangleright$] When $ \alpha = 0 $, the eigenvalues are purely imaginary ($\lambda_{1,2} = \pm i \sqrt{\epsilon + \sigma}$), and the origin is a center.
    \end{itemize}
\end{itemize}

\noindent\textbf{Case 2: $ m > 1 $} \\
Here, the system \eqref{gdo} has up to three equilibria. Setting \( \dot{x} = 0 \) and \( \dot{y} = 0 \), we obtain the equation $-\epsilon x^m - \sigma x = 0,$ which factors as $x (\epsilon x^{m-1} + \sigma) = 0.$Thus, the equilibrium points are given by \eqref{eqneeed}.
    The existence of $ E^{\pm} $ depends on the values of $ \sigma $ and $ \epsilon $:
\begin{itemize}
    \item If $ m $ is odd, $ E^{\pm} $ exist only when $ \sigma \cdot \epsilon < 0 $.
    \item If $ m $ is even, $E^{+}$ always exists.
\end{itemize}

The Jacobian matrix at an equilibrium $(x^*, 0)$ is
\begin{equation*}
J(x^*, 0) = \begin{pmatrix}
0 & 1 \\
-\epsilon m (x^*)^{m-1} - \sigma & -\alpha
\end{pmatrix}.
\end{equation*}
\begin{itemize}
    \item At the origin $(0, 0)$, the eigenvalues are
    \(\lambda_{1,2} = -\frac{1}{2}\alpha \pm \frac{1}{2} \sqrt{\alpha^2 - 4\sigma}.\)
    \item At $ E^+ = \left( \sqrt[m-1]{-\frac{\sigma}{\epsilon}}, 0 \right) $, the eigenvalues are
    \(\lambda^{+}_{1,2} = -\frac{1}{2}\alpha \pm \frac{1}{2} \sqrt{\alpha^2 + 4\sigma(m-1)}.\)
    \item At $ E^- = \left( -\sqrt[m-1]{-\frac{\sigma}{\epsilon}}, 0 \right) $, the eigenvalues are
    \(\lambda^{-}_{1,2} = -\frac{1}{2}\alpha \pm \frac{1}{2} 
    \sqrt{\alpha^2 - 4\sigma \left((-1)^m  m +1\right)}.\)
\end{itemize}
The stability of these equilibria is determined by the signs of the real parts of the eigenvalues, as summarized in Tables \eqref{tab1} and \eqref{tab2} for odd and even $ m $, respectively. The proof is straightforward and supported by Figures \ref{M11}, \ref{M22} and \ref{M33}.

\end{proof}

The case \(\alpha = 0\) in the generalized Duffing system for \(m > 1\) presents a particular challenge due to the nonlinear nature of the system. When \(\alpha = 0\), the damping term vanishes, potentially leading to more complex and richer dynamical behaviors, such as the formation of closed orbits. In this context, the equilibria of the system can exhibit purely imaginary eigenvalues, suggesting the possibility of Hopf bifurcations or the formation of centers. However, unlike the linear case (\(m = 1\)), the nonlinearity of the system for \(m > 1\) makes the stability analysis depend critically on the parity of \(m\) and the signs of the parameters \(\sigma\) and \(\epsilon\). As a consequence of Proposition \ref{prop_sing}, we obtain the following results, which provides precise conditions under which the equilibria \((0, 0)\), \(E^+\), and \(E^-\) exhibit purely imaginary eigenvalues, thereby setting the stage for a more detailed study of bifurcations and periodic behaviors in the system.

\begin{figure}[t!]
\centering
% First row of images (4 figures)
\includegraphics[width=.22\columnwidth,height=.2\columnwidth]{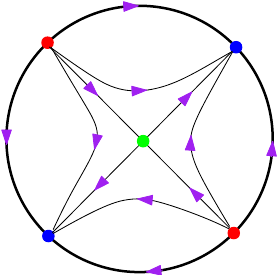}%
\hfill
\includegraphics[width=.22\columnwidth,height=.2\columnwidth]{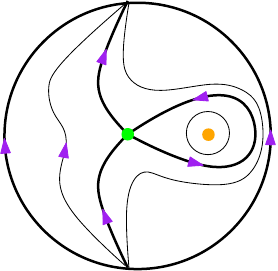}%
\hfill
\includegraphics[width=.22\columnwidth,height=.2\columnwidth]{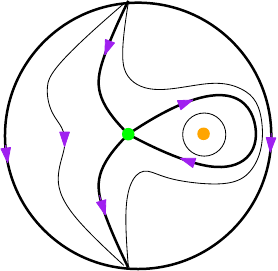}%
\hfill
\includegraphics[width=.22\columnwidth,height=.2\columnwidth]{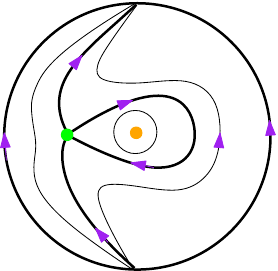}

% First row captions
\smallskip
\begin{minipage}{0.22\columnwidth}
\centering\tiny (a) $m=1$, \text{and} $\alpha=0$, $\epsilon+\sigma<0$
\end{minipage}%
\hfill
\begin{minipage}{0.22\columnwidth}
\centering\tiny(b) $m=2$, \text{and} $\alpha=0$, $\epsilon<0, \sigma<0$
\end{minipage}%
\hfill
\begin{minipage}{0.22\columnwidth}
\centering\tiny (c) $m=2$, \text{and} $\alpha=0$, $\epsilon>0, \sigma<0$
\end{minipage}%
\hfill
\begin{minipage}{0.22\columnwidth}
\centering\tiny (d) $m=2$, \text{and} $\alpha=0$, $\epsilon<0, \sigma>0$
\end{minipage}

% Second row of images (4 figures)
\medskip
\includegraphics[width=.22\columnwidth,height=.2\columnwidth]{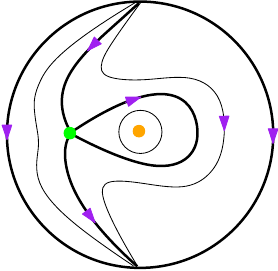}%
\hfill
\includegraphics[width=.22\columnwidth,height=.2\columnwidth]{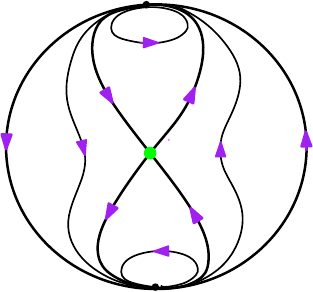}%
\hfill
\includegraphics[width=.22\columnwidth,height=.2\columnwidth]{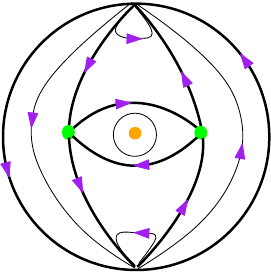}%
\hfill
\includegraphics[width=.22\columnwidth,height=.2\columnwidth]{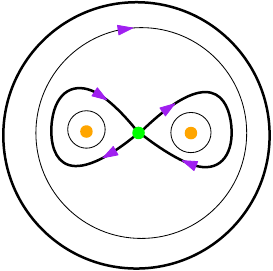}

% Second row captions
\smallskip
\begin{minipage}{0.22\columnwidth}
\centering\tiny (e) $m=2$, \text{and} $\alpha=0$, $\epsilon>0, \sigma>0$
\end{minipage}%
\hfill
\begin{minipage}{0.22\columnwidth}
\centering\tiny (f) $m=3,5$, \text{and} $\alpha=0$, $\epsilon, \sigma<0$
\end{minipage}%
\hfill
\begin{minipage}{0.22\columnwidth}
\centering\tiny (g) $m=3,5$, \text{and} $\alpha=0$, $\epsilon<0, \sigma>0$
\end{minipage}%
\hfill
\begin{minipage}{0.22\columnwidth}
\centering\tiny (h) $m=3,5$, \text{and} $\alpha=0$, $\sigma<0, \epsilon>0$
\end{minipage}

\caption{\label{M44} The global phase portraits of system \eqref{gdo} for $\alpha=0$.}  
\end{figure}
 
\begin{cor}\label{cor_pair}
Suppose that $m>1$. Then the finite equilibria $(0, 0)$, $E^+$, and $E^-$ exhibit a pair of purely imaginary eigenvalues under the following conditions:
\begin{itemize}
    \item[(i)] For $(0, 0)$, the condition is $\alpha = 0$ and $\sigma > 0$.

\item[(ii)] For $E^+$, the condition is $\alpha = 0$ and $\sigma < 0$.

\item[(iii)] For $E^-$, the condition depends on the parity of $m$.
   \begin{itemize}
      \item If $m$ is odd, $\alpha = 0$ and $\sigma < 0$;
      \item If $m$ is even, $\alpha = 0$ and $\sigma > 0$.
   \end{itemize}
   \end{itemize}
\end{cor}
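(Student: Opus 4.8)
The plan is to derive the corollary directly from the eigenvalue computations already carried out in the proof of Proposition~\ref{prop_sing}, imposing the two conditions that characterise a pair of purely imaginary eigenvalues for a $2\times 2$ real matrix: the trace must vanish (equivalently the common real part is zero), and the discriminant under the square root must be strictly negative so that the eigenvalues are nonzero and lie on the imaginary axis.

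First I would recall that at every finite equilibrium the eigenvalues take the form $\lambda = -\tfrac{1}{2}\alpha \pm \tfrac{1}{2}\sqrt{\Delta}$, with discriminant $\Delta = \alpha^2 - 4\sigma$ at the origin, $\Delta = \alpha^2 + 4\sigma(m-1)$ at $E^+$, and $\Delta = \alpha^2 - 4\sigma\big((-1)^m m + 1\big)$ at $E^-$. Whenever $\Delta \le 0$ the real part equals $-\tfrac{1}{2}\alpha$, so the requirement $\operatorname{Re}(\lambda)=0$ forces $\alpha = 0$ in each case. This single observation disposes of the common necessary condition for all three equilibria simultaneously.

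Next, substituting $\alpha = 0$ reduces the eigenvalues to $\lambda = \pm\tfrac{1}{2}\sqrt{\Delta_0}$, where $\Delta_0$ denotes the remaining constant term, and purely imaginary nonzero eigenvalues occur precisely when $\Delta_0 < 0$. For the origin one has $\Delta_0 = -4\sigma$, giving the condition $\sigma > 0$; for $E^+$ one has $\Delta_0 = 4\sigma(m-1)$, and since $m > 1$ implies $m-1 > 0$, the sign of $\Delta_0$ coincides with that of $\sigma$, so the condition is $\sigma < 0$. Both of these cases are immediate.

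The only point requiring care, and hence the main obstacle, concerns $E^-$, where the coefficient $(-1)^m m + 1$ flips sign with the parity of $m$. When $m$ is odd one has $(-1)^m m + 1 = 1 - m < 0$, so $\Delta_0 = 4\sigma(m-1)$ and the condition becomes $\sigma < 0$; when $m$ is even one has $(-1)^m m + 1 = m + 1 > 0$, so $\Delta_0 = -4\sigma(m+1)$ and the condition becomes $\sigma > 0$. Keeping track of this parity-dependent sign is elementary but is the only nontrivial bookkeeping in the argument, and once it is handled all three cases match the statement of the corollary exactly.
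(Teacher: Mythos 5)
Your proposal is correct and follows essentially the same route as the paper, which presents the corollary as a direct consequence of the eigenvalue formulas computed in the proof of Proposition~\ref{prop_sing}: the trace condition forces $\alpha=0$, and the sign of the remaining discriminant ($-4\sigma$ at the origin, $4\sigma(m-1)$ at $E^+$, and $-4\sigma((-1)^m m+1)$ at $E^-$, with the parity bookkeeping you carry out) yields exactly the stated conditions. Nothing further is needed.
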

\begin{cor}\label{lem_center} 
Assume that $m>1$. The origin is the unique equilibrium point of the polynomial differential system \eqref{gdo} if, and only if, $m$ is odd and  $\sigma \epsilon > 0$.
\end{cor}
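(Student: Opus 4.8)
The plan is to read the statement off directly from the description of the finite equilibria in Proposition~\ref{prop_sing}. Setting $\dot x=\dot y=0$ in \eqref{gdo} forces $y=0$ together with $-\epsilon x^{m}-\sigma x=-x(\epsilon x^{m-1}+\sigma)=0$, so the origin is always an equilibrium and the remaining equilibria are precisely the nonzero real roots of $\epsilon x^{m-1}+\sigma=0$, i.e. of
\begin{equation*}
x^{m-1}=-\frac{\sigma}{\epsilon}.
\end{equation*}
Hence the origin is the \emph{unique} equilibrium if and only if this equation has no nonzero real solution, and the whole argument reduces to deciding its solvability according to the parity of $m$ and the signs of $\sigma,\epsilon$ (recall $\epsilon\neq0$).

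First I would treat the case of even $m$. Then $m-1$ is odd and $x\mapsto x^{m-1}$ is an increasing bijection of $\mathbb{R}$; hence $x^{m-1}=-\sigma/\epsilon$ always has a real root, and for $\sigma\neq0$ this root is nonzero. This produces the equilibrium $E^{+}$ of \eqref{eqneeed}, consistent with Proposition~\ref{prop_sing}(ii), so for even $m$ the origin is never the only equilibrium. Consequently, uniqueness of the origin forces $m$ to be odd, which already establishes the necessity of the parity condition.

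Next, for $m$ odd, $m-1$ is even, so $x^{m-1}\ge 0$ for all real $x$ and vanishes only at $x=0$. Therefore $x^{m-1}=-\sigma/\epsilon$ has a nonzero real solution if and only if $-\sigma/\epsilon>0$, that is $\sigma\epsilon<0$; in that case one recovers the pair $E^{\pm}=\bigl(\pm\sqrt[m-1]{-\sigma/\epsilon},0\bigr)$, again matching Proposition~\ref{prop_sing}(ii). Conversely, when $\sigma\epsilon>0$ the right-hand side $-\sigma/\epsilon$ is negative, the equation has no real solution, and the origin is the unique equilibrium. Combining the two implications yields, for $m$ odd, that the origin is the only equilibrium exactly when $\sigma\epsilon>0$, which is the claimed characterization.

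The argument is essentially elementary casework built on Proposition~\ref{prop_sing}, so I do not anticipate a genuine obstacle; the only point requiring care is the boundary value $\sigma=0$. Since $\sigma\epsilon>0$ already forces $\sigma\neq0$ (as $\epsilon\neq0$), and since the standing sign conventions of Tables~\ref{tab1} and \ref{tab2} exclude $\sigma=0$, I would state the corollary under the implicit hypothesis $\sigma\neq0$; otherwise one should record the degenerate case $\sigma=0$ separately, where $\epsilon x^{m}=0$ makes the origin the unique equilibrium for every $m$.
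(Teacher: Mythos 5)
Your argument is correct and is essentially the same as the paper's, which states the corollary as an immediate consequence of Proposition~\ref{prop_sing}(ii): the nonzero equilibria are the real roots of $\epsilon x^{m-1}+\sigma=0$, and the parity/sign casework you carry out is exactly what that item encodes. Your explicit remark about the degenerate value $\sigma=0$ (where the origin is unique for every $m$, so the stated equivalence implicitly assumes $\sigma\neq0$) is a point the paper glosses over, but it does not change the approach.
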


 In the scenario \(\alpha = 0\), the system exhibits different types of cycles depending on the value of \(m\) and the signs of the parameters \(\sigma\) and \(\epsilon\). These behaviors are characterized by the presence of homoclinic and heteroclinic cycles, as well as the configuration of equilibrium points, which can either correspond to saddles or centers, see Figure \ref{fig_new}. The nature of these cycles and equilibrium points is crucial for understanding the long-term dynamics of the system, as we will describe in the following proposition.
\begin{figure}[t!]
\centering
% Primera fila de imágenes
\includegraphics[width=.22\columnwidth,height=.22\columnwidth]{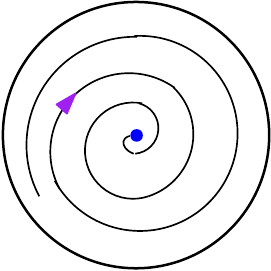}%
\hfill
\includegraphics[width=.22\columnwidth,height=.22\columnwidth]{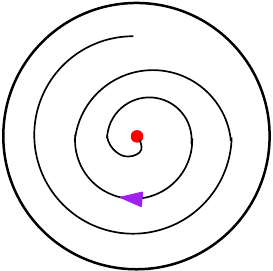}%
\hfill
\includegraphics[width=.22\columnwidth,height=.22\columnwidth]{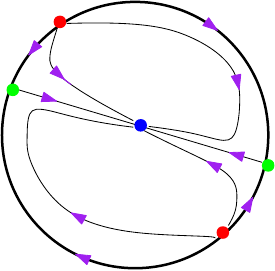}%
\hfill
\includegraphics[width=.23\columnwidth,height=.22\columnwidth]{m135}

% Leyendas primera fila
\smallskip
\begin{minipage}{0.22\columnwidth}
\centering\tiny (a) $\alpha^2<4(\epsilon+\sigma), \alpha>0$
\end{minipage}%
\hfill
\begin{minipage}{0.22\columnwidth}
\centering\tiny (b) $\alpha^2<4(\epsilon+\sigma), \alpha<0$
\end{minipage}%
\hfill
\begin{minipage}{0.22\columnwidth}
\centering\tiny (c) $\alpha^2>4(\epsilon+\sigma),$ $\alpha>0,$ $ \epsilon+\sigma>0$
\end{minipage}%
\hfill
\begin{minipage}{0.24\columnwidth}
\centering\tiny (d) $\alpha^2>4(\epsilon+\sigma), \epsilon+\sigma<0$
\end{minipage}

% Segunda fila de imágenes (3 figuras centradas con espacio moderado)
\medskip
\centering
\includegraphics[width=.22\columnwidth,height=.2\columnwidth]{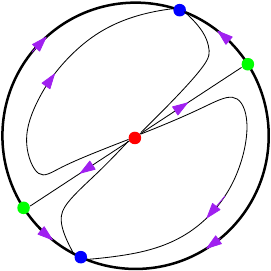}%
\hspace{0.1\columnwidth}%  ← Espacio reducido a la mitad
\includegraphics[width=.22\columnwidth,height=.22\columnwidth]{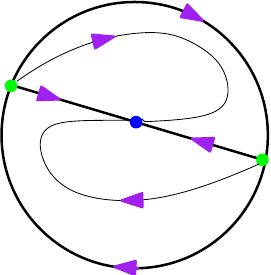}%
\hspace{0.1\columnwidth}%  ← Espacio reducido a la mitad
\includegraphics[width=.22\columnwidth,height=.22\columnwidth]{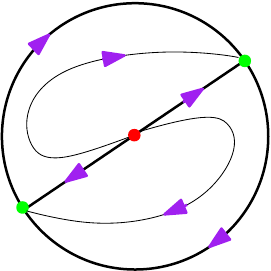}

% Leyendas segunda fila
\smallskip
\centering
\begin{minipage}{0.22\columnwidth}
\centering\tiny (e) $\alpha^2>4(\epsilon+\sigma),$ $\alpha<0,$ $ \epsilon+\sigma>0$
\end{minipage}%
\hspace{0.1\columnwidth}%  ← Mismo espacio que las imágenes
\begin{minipage}{0.22\columnwidth}
\centering\tiny (f) $\alpha^2=4(\epsilon+\sigma), \alpha>0$
\end{minipage}%
\hspace{0.1\columnwidth}%  ← Mismo espacio que las imágenes
\begin{minipage}{0.22\columnwidth}
\centering\tiny (g) $\alpha^2=4(\epsilon+\sigma), \alpha<0$
\end{minipage}

\caption{\label{M11} The global phase portraits of system \eqref{gdo} for $m=1$.}  
\end{figure}
\begin{prop}\label{new_prop}
Consider system \eqref{gdo} with $m > 1$ and assume $I_0 = \{\alpha = 0\}$. The following dynamical behaviors occur on $I_0$:
\begin{itemize}
\item[(i)] For even $m$, system \eqref{gdo} exhibits a homoclinic cycle.
\item[(ii)] For odd $m$ with $\sigma > 0$ and $\epsilon < 0$, system \eqref{gdo} undergoes a heteroclinic cycle.
\item[(iii)] For odd $m$ with $\sigma < 0$ and $\epsilon > 0$, system \eqref{gdo} admits a double-homoclinic cycle.
\end{itemize}
\end{prop}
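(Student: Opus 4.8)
The plan is to use that on $I_0=\{\alpha=0\}$ system \eqref{gdo} becomes the Newtonian system $\dot x=y$, $\dot y=-\epsilon x^m-\sigma x$ of subsection~\ref{sec:newtonian}, with force $f(x)=-\epsilon x^m-\sigma x$, potential
\[
U(x)=-\int_0^x f(s)\,ds=\frac{\epsilon}{m+1}\,x^{m+1}+\frac{\sigma}{2}\,x^2 ,
\]
and Hamiltonian $H(x,y)=\tfrac12 y^2+U(x)$ whose level curves are the orbits. By Theorem~\ref{maxmin} each finite equilibrium lies on the $x$-axis at a critical point of $U$ and is a center (resp.\ saddle) exactly when that point is a strict local minimum (resp.\ maximum) of $U$, and the phase portrait is symmetric with respect to the $x$-axis. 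I would identify each cycle as the bounded component(s) of the level set of $H$ through the saddle point(s).

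First I would record $U'(x)=x(\epsilon x^{m-1}+\sigma)$ and $U''(x)=\epsilon m x^{m-1}+\sigma$, so that $U''(0)=\sigma$ while at any nonzero critical point $x^{*}=\sqrt[m-1]{-\sigma/\epsilon}$ one has $U''(x^{*})=\sigma(1-m)$. Since $m>1$ these two values have opposite signs, so a nonzero critical point is a center precisely when the origin is a saddle, and conversely; this matches Proposition~\ref{prop_sing} and Corollary~\ref{cor_pair}. The decisive structural input is the parity of $m+1$: for even $m$ the exponent $m+1$ is odd, $U$ is strictly monotone near infinity and $U(x)\to\pm\infty$ as $x\to\pm\infty$ (the sign governed by that of $\epsilon$); for odd $m$ the exponent $m+1$ is even, $U$ is an even function, and $U(x)$ tends to the same infinite limit at both ends.

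I would then split into the three cases. (i) For even $m$, $U$ has exactly two critical points (the origin and the unique real root $x^{*}$), one a strict maximum and one a strict minimum, giving one saddle and one adjacent center; since past the center (away from the saddle) $U$ increases monotonically to $+\infty$ while past the saddle (away from the center) it decreases monotonically to $-\infty$, the level set $\{H=U(\text{saddle})\}$ has a single bounded branch leaving the saddle, encircling the center and returning—a homoclinic loop. (ii) For odd $m$ with $\sigma>0$, $\epsilon<0$ one has $-\sigma/\epsilon>0$ and, as $m-1$ is even, two symmetric roots $\pm x^{*}$; here $U''(0)=\sigma>0$ (origin a center) and $U''(\pm x^{*})<0$ (both saddles), and evenness gives $U(x^{*})=U(-x^{*})$, so both saddles sit on the common level $\{H=U(x^{*})\}$. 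As $U\to-\infty$ at both ends this level has an inner bounded loop between the saddles enclosing the center; its two arcs are heteroclinic orbits joining $E^{+}$ and $E^{-}$, i.e.\ a heteroclinic cycle. (iii) For odd $m$ with $\sigma<0$, $\epsilon>0$ the roles reverse: the origin is a saddle and $\pm x^{*}$ are centers, with $U\to+\infty$ at both ends; by the $x$-axis symmetry the level set through the origin, $\{H=U(0)\}$, consists of two bounded loops, one on each side of the origin and each enclosing one center, joined at the saddle—the figure-eight, i.e.\ a double-homoclinic cycle.

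The genuinely global step, and the main obstacle, is to confirm that the relevant branch of each saddle level set really closes into a bounded orbit rather than escaping to infinity, and that it contains no further equilibrium. I would settle this from the explicit asymptotics of $U$ (fixed by the parity of $m+1$ and the sign of $\epsilon$) together with the strict monotonicity of $U$ between consecutive critical points, which pin down exactly which intervals of $\{U(x)\le U(\text{saddle})\}$ are bounded; in the two odd-$m$ cases the evenness of $U$ is precisely what forces the two saddles (resp.\ two centers) to share one energy level, making the connections exact rather than approximate.
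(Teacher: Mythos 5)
Your proposal is correct and follows essentially the same route as the paper: reduce on $\{\alpha=0\}$ to the Newtonian system with potential $U(x)=\frac{\epsilon}{m+1}x^{m+1}+\frac{\sigma}{2}x^2$, classify the critical points of $U$ via Theorem~\ref{maxmin}, and read off the homoclinic, heteroclinic, and double-homoclinic connections from the saddle--center configuration. In fact you supply a detail the paper leaves implicit, namely the verification via the asymptotics and monotonicity of $U$ (and, in the odd-$m$ cases, its evenness, which places the two saddles or two centers on a common energy level) that the saddle level sets of $H=\frac{y^2}{2}+U(x)$ actually close up into bounded separatrix loops.
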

\begin{proof}
When $\alpha = 0$, system \eqref{gdo} reduces to the Newtonian system
\begin{equation}\label{gdoNewton}
    \dot{x} = y, \quad \dot{y} = -\epsilon x^m - \sigma x = -\dot{U}(x),
\end{equation}
with potential energy
\begin{equation*}
U(x) = \dfrac{\epsilon}{m+1}x^{m+1} + \dfrac{\sigma}{2}x^2.
\end{equation*}
The critical points of $U(x)$ correspond to equilibrium points of \eqref{gdoNewton}, and their stability is determined by Theorem~\eqref{maxmin}.

In the case where $m$ is even, depending on the sign of $\sigma$, the potential $U(x)$ has exactly one strict local maximum and one strict local minimum, located at $x = 0$ and $x = \left(-\sigma/\epsilon\right)^{1/(m-1)}$. By Theorem~\eqref{maxmin}, the maximum corresponds to a saddle point and the minimum to a center. Therefore, there exists a homoclinic cycle, which is a trajectory connecting the saddle point to itself and enclosing the center.

For $m$ odd, two subcases arise:

\begin{itemize}
    \item Subcase 1: $\sigma > 0$ and $\epsilon < 0$: In this case, $U(x)$ has two strict local maxima at $x = \pm (-\sigma/\epsilon)^{1/(m-1)}$ and a strict local minimum at $x = 0$. By Theorem~\eqref{maxmin}, the maxima correspond to saddle points, while $(0,0)$ is a center. Hence, a heteroclinic cycle exists, consisting of trajectories connecting the two saddle points.
    
    \item Subcase 2: $\sigma < 0$ and $\epsilon > 0$: Here, $U(x)$ has a strict local maximum at $x = 0$ and two strict local minima at $x = \pm (-\sigma/\epsilon)^{1/(m-1)}$. By Theorem~\eqref{maxmin}, $(0,0)$ is a saddle point, while the minima correspond to centers. Therefore, a double-homoclinic cycle exists, consisting of two separate trajectories each connecting the saddle point to itself, forming a figure-eight pattern around both centers.
\end{itemize}

\end{proof}

%The following result is a consequence of Propositions \ref{prop_sing} and \ref{new_prop}.

%\begin{cor}
%Consider the differential system \eqref{gdo} with $\alpha = 0$. The type and number of equilibria depend on the parameter $m$ as follows:

%\begin{itemize}
%    \item[(i)] Case $m = 1$:
%    The origin is the only equilibrium. It is a saddle point when $\epsilon + \sigma < 0$.

%    \item[(ii)] Case $m > 1$ (even):
%    There are two equilibria: the origin and $E^{+}$ (given by \eqref{eqneeed}). Their stability depends on $\sigma$:
%    \begin{itemize}
%        \item When $\sigma < 0$, the origin is a saddle point and $E^{+}$ is a center.
%        \item When $\sigma > 0$, the origin is a center and $E^{+}$ is a saddle point.
%    \end{itemize}

%    \item[(iii)] Case $m > 1$ (odd):
%    \begin{itemize}
%        \item For $\sigma \epsilon > 0$, there exists only one equilibrium at the origin.
%            \begin{itemize}
%                \item When $\sigma, \epsilon < 0$, it is a saddle point.
%            \end{itemize}
%        \item For $\sigma \epsilon < 0$, there are three equilibria: the origin and $E^{\pm}$ (see \eqref{eqneeed}):
 %           \begin{itemize}
 %               \item If $\sigma > 0$: The origin is a center and $E^{\pm}$ are saddle points.
 %               \item If $\sigma < 0$: The origin is a saddle point and $E^{\pm}$ are centers.
 %           \end{itemize}
 %   \end{itemize}
%\end{itemize}
%\end{cor}

\begin{figure}[t!]
\centering
\begin{minipage}{0.45\columnwidth}
\centering
\includegraphics[width=\linewidth,keepaspectratio]{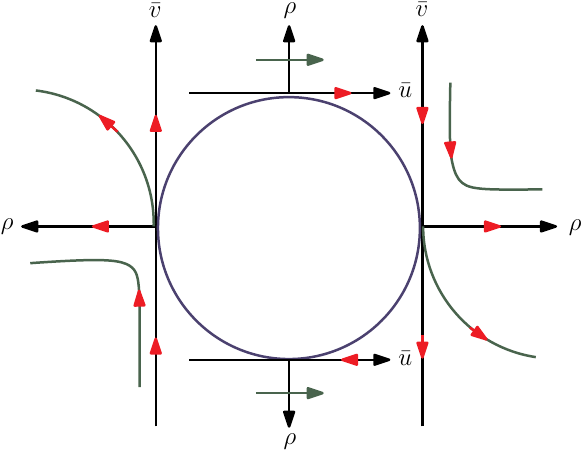}\\
\vspace{4pt}
\tiny (a) $\epsilon>0$
\end{minipage}%
\hspace{0.05\columnwidth}%
\begin{minipage}{0.45\columnwidth}
\centering
\includegraphics[width=\linewidth,keepaspectratio]{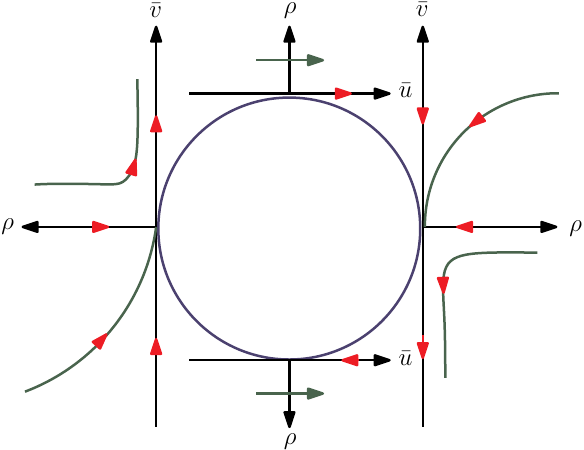}\\
\vspace{4pt}
\tiny (b) $\epsilon<0$
\end{minipage}

\caption{\label{LPo1e} The qualitative properties of vector fields $X_1^{+}$, $X_1^{-}$, $Y_1^{+}$, and $Y_1^{-}$.}
\end{figure}
 
We conclude this section by providing a very interesting result on the non-existence of limit cycles in system \eqref{gdo}$|_{\alpha\neq 0}$.
\begin{lem}\label{L5}
System \eqref{gdo}$|_{\alpha\neq 0}$ has no limit cycles.
\end{lem}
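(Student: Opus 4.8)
The plan is to apply the Bendixson–Dulac criterion from Subsection~\ref{sec:ben_crit} with a cleverly chosen Dulac function, exploiting the special structure of the Li\'enard-type system~\eqref{gdo}. Writing the system as $\dot{x}=P=y$ and $\dot{y}=Q=-\alpha y-\epsilon x^m-\sigma x$, the naive divergence is $\partial P/\partial x+\partial Q/\partial y=-\alpha$, which already has constant sign $-\operatorname{sign}(\alpha)$ on all of $\mathbb{R}^2$. Since $\alpha\neq 0$, this is everywhere nonzero and of one sign, so the domain $D=\mathbb{R}^2$ is simply connected and the divergence never vanishes. The criterion then immediately forbids any closed orbit contained in $\mathbb{R}^2$, and in particular rules out limit cycles.

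Concretely, the steps I would carry out are as follows. First, identify $P(x,y)=y$ and $Q(x,y)=-\alpha y-\epsilon x^m-\sigma x$, and verify that both are polynomial, hence $\mathbb{C}^1$ on all of $\mathbb{R}^2$, so the hypotheses of the Bendixson criterion are met. Second, compute the divergence:
\begin{equation*}
\frac{\partial P}{\partial x}+\frac{\partial Q}{\partial y}=\frac{\partial}{\partial x}(y)+\frac{\partial}{\partial y}\bigl(-\alpha y-\epsilon x^m-\sigma x\bigr)=-\alpha.
\end{equation*}
Third, observe that since $\alpha\neq 0$ by hypothesis, the quantity $-\alpha$ is a nonzero constant, hence of strict constant sign throughout the simply connected domain $D=\mathbb{R}^2$, with no exceptional zero-measure set even needed. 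Fourth, invoke the Bendixson criterion to conclude that system~\eqref{gdo}$|_{\alpha\neq 0}$ admits no closed orbit lying entirely in $\mathbb{R}^2$; since every limit cycle is in particular a closed orbit, there are no limit cycles.

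Here there is essentially no obstacle: the Dulac function can be taken trivially as $f\equiv 1$, and the argument reduces to the elementary observation that the divergence of this damped system is the constant $-\alpha$. The only point worth stating carefully is that the conclusion covers \emph{all} admissible parameter values $(\epsilon\neq 0,\ m\geq 1,\ \sigma\in\mathbb{R})$ simultaneously, since the divergence computation is independent of $\epsilon$, $\sigma$, and $m$; the sole relevant hypothesis is $\alpha\neq 0$. Thus no case distinction on the parity of $m$ or the signs of $\sigma,\epsilon$ is required, and the result follows uniformly from the single divergence computation above.
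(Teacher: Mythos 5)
Your proposal is correct and follows exactly the paper's own argument: compute the divergence $\partial P/\partial x + \partial Q/\partial y = -\alpha$, note it is a nonzero constant since $\alpha \neq 0$, and apply the Bendixson criterion on the simply connected domain $\mathbb{R}^2$. Your write-up is merely a more detailed version of the paper's one-line proof.
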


\begin{proof}
Assume that $F$ is the vector field associated to system \eqref{gdo}$|_{\alpha\neq 0}$. Emphasize that the divergence of $F$ is $-\alpha\neq 0$, thus from \textit{Bendison's criterion} (see section \ref{sec:ben_crit}) the result follows.
\end{proof}

\subsection{Global Dynamics}\label{GB}
In this section, we investigate the global dynamics of the Generalized Duffing Oscillator system. Specifically, we analyze the behavior of the system for different values of the parameter \(m\). The results will focus on the nature of the equilibria, their stability, and the corresponding phase portraits under varying conditions. The following propositions describe the key findings for any arbitrary number \(m.\)

\begin{prop}\label{thm3}
For \(m=1\), the differential system \eqref{gdo} has no infinite equilibrium points 
at the origin in the local chart \(U_2,\) and has at most two infinite equilibrium points \(P^{\pm}=\,\left(-\frac{1}{2}\alpha\pm \frac{1}{2} \sqrt{\alpha^2-4 (\epsilon+\sigma)}, 0\right)\) in the local charts \(U_1.\) 
\begin{itemize}
\item[(a)]
If \(\alpha^2<4(\epsilon+\sigma),\) there is no infinite equilibrium points in the local chart \(U_1.\)
\item[(b)]
If \(\alpha^2>4(\epsilon+\sigma),\) the equilibria \(P^{\pm}\) are a saddle (resp. unstable node) equilibria when $\epsilon+\sigma>0$ and  $\alpha>0$.
\item[(c)]
If \(\alpha^2>4(\epsilon+\sigma),\) \(P^{\pm}\) are a stable node (resp. saddle) for $\epsilon+\sigma>0$ and  $\alpha<0.$
\item[(d)] For \(\epsilon+\sigma<0,\) equilibria \(P^{\pm}\) are stable (resp. unstable) node. 
\item[(e)] For \(\alpha^2-4(\epsilon+\sigma)=0,\) there is only the equilibrium \(\left(-\frac{1}{2}\alpha, 0\right)\) in the local chart \(U_1\) which is a saddle-node point.
\end{itemize}
\end{prop}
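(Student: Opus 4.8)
The plan is to compute the Poincar\'e compactification of the reduced system \eqref{m1} in the charts $U_1$ and $U_2$ using \eqref{poincare_comp}, and then to read off the infinite equilibria together with their hyperbolic type directly from the resulting planar fields. For $m=1$ system \eqref{gdo} is $\dot x=y$, $\dot y=-\alpha y-(\epsilon+\sigma)x$, so $P(x,y)=y$, $Q(x,y)=-\alpha y-(\epsilon+\sigma)x$ and the degree is $n=1$. Substituting into \eqref{poincare_comp}, I expect the chart $U_1$ to yield
\[
\dot u=-u^2-\alpha u-(\epsilon+\sigma),\qquad \dot v=-uv,
\]
and the chart $U_2$ to yield $\dot u=(\epsilon+\sigma)u^2+\alpha u+1$, $\dot v=v\bigl(\alpha+(\epsilon+\sigma)u\bigr)$. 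Evaluating the $U_2$ field at the origin gives $\dot u=1\neq 0$, which shows at once that there is no infinite equilibrium at the origin of $U_2$, i.e. in the vertical direction at infinity.

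Next, the infinite equilibria in $U_1$ lie on $\{v=0\}$ and solve $u^2+\alpha u+(\epsilon+\sigma)=0$, whose roots are precisely $P^{\pm}=\bigl(-\tfrac12\alpha\pm\tfrac12\sqrt{\alpha^2-4(\epsilon+\sigma)},0\bigr)$, their number being controlled by the sign of the discriminant $\alpha^2-4(\epsilon+\sigma)$. This immediately gives part~(a): when $\alpha^2<4(\epsilon+\sigma)$ the quadratic has no real root, so $U_1$ carries no infinite equilibrium. The key simplification is that the Jacobian of the $U_1$ field at a point $(u,0)$ is the diagonal matrix $\mathrm{diag}(-2u-\alpha,\,-u)$, so its eigenvalues are $-2u-\alpha$ and $-u$. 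Substituting $u=u_{\pm}$ produces the eigenvalue pair $\mp\sqrt{\alpha^2-4(\epsilon+\sigma)}$ and $\tfrac12\bigl(\alpha\mp\sqrt{\alpha^2-4(\epsilon+\sigma)}\bigr)$ at $P^{\pm}$.

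Parts~(b)--(d) then reduce to a sign discussion of these two eigenvalues. Writing $\Delta=\sqrt{\alpha^2-4(\epsilon+\sigma)}>0$, I would note that $\epsilon+\sigma>0$ forces $\Delta<|\alpha|$ whereas $\epsilon+\sigma<0$ forces $\Delta>|\alpha|$; combining this with the sign of $\alpha$ fixes whether each eigenvalue pair has equal or opposite signs, which yields exactly the node/saddle classifications claimed in (b), (c) and (d). The main obstacle is part~(e): when $\alpha^2-4(\epsilon+\sigma)=0$ the two roots merge into the single point $(-\alpha/2,0)$, where one eigenvalue vanishes, so the point is only semi-hyperbolic and its type is not detected by the linear part. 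To resolve it I would translate $(-\alpha/2,0)$ to the origin, where a short computation collapses the $U_1$ field (for $\alpha\neq0$) to
\[
\dot U=-U^2,\qquad \dot V=\tfrac{\alpha}{2}V-UV.
\]
Here the $U$-axis $\{V=0\}$ is invariant and tangent to the center eigenspace, hence is the center manifold, and the reduced flow on it is $\dot U=-U^2$; since this has even order with nonvanishing coefficient, the classification of semi-hyperbolic singular points (see \cite[Theorem~2.19]{MR2256001}) identifies $(-\alpha/2,0)$ as a saddle-node, which is statement (e). Finally, since $n=1$ gives $(-1)^{n-1}=1$, the charts $V_1,V_2$ carry the same expressions as $U_1,U_2$, so the antipodal infinite points require no separate analysis.
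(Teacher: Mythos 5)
Your proposal is correct and follows essentially the same route as the paper: the same compactified expressions in $U_1$ and $U_2$, the same diagonal Jacobian at $P^{\pm}$ yielding the sign analysis for (b)--(d), and the same appeal to the semi-hyperbolic classification for the saddle-node in (e), which you merely carry out more explicitly (translation plus center-manifold reduction, and the sensible caveat $\alpha\neq 0$) where the paper simply cites \cite[Theorem~2.19]{MR2256001}.
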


\begin{proof}
System \eqref{m1} on the local chart \(U_2\) becomes 
\begin{equation*}
\dot{u}=1+\alpha u+(\epsilon+\sigma) u^2, \quad \dot{v}=\alpha v+(\epsilon+\sigma) uv,
\end{equation*}
and it has no infinite equilibrium points at the origin in this chart.

Now, the expression of system \eqref{m1} in the local chart \(U_1\) is 
\begin{equation*}
\dot{u}=-\epsilon-\sigma-\alpha u-u^2, \quad \dot{v}=- uv,
\end{equation*}
and has two infinite equilibrium points \(P^{\pm}\) for \(\alpha^2>4(\epsilon+\sigma).\) Since the associated Jacobian matrix is given by
\begin{align*}
\left( \begin {array}{cc} 
\mp\sqrt {{\alpha}^{2}-4(\epsilon+\sigma)}&0\\
0&\frac{1}{2}\alpha\mp\frac{1}{2} \sqrt {{\alpha}^{2}-4(\epsilon+\sigma)}
\end {array} \right),
\end{align*}
then statements (b)-(d) will be confirmed. When \(\alpha^2-4(\epsilon+\sigma)=0,\) there is only the equilibrium \(\left(-\frac{1}{2}\alpha, 0\right)\) which is a semi-hyperbolic point. By using \cite[Theorem 2.19]{MR2256001} it is leads to conclude that it is a saddle-node.
\end{proof}

\begin{figure}[t!]
\centering
\begin{minipage}{0.48\columnwidth} % Aumentado a 0.44 (antes 0.42)
\centering
\includegraphics[width=\linewidth,height=0.75\columnwidth,keepaspectratio]{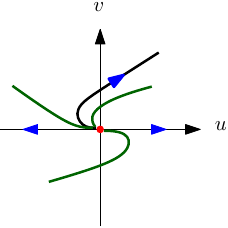}\\
\vspace{4pt}
\tiny (a) $\epsilon>0$
\end{minipage}%
\hspace{0.012\columnwidth}% % Espacio mínimo ajustado
\begin{minipage}{0.48\columnwidth}
\centering
\includegraphics[width=\linewidth,height=0.75\columnwidth,keepaspectratio]{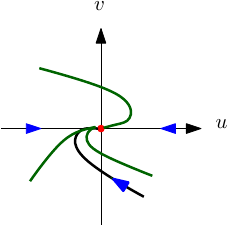}\\
\vspace{4pt}
\tiny (b) $\epsilon<0$
\end{minipage}

\caption{\label{LPoe} The local phase portraits of the origin of system \eqref{newnew1e}.}   
\end{figure}
\begin{rem}\label{TCM1}
    For the differential system \eqref{gdo} with \( m = 1 \), the origin \((0, 0)\) is the unique equilibrium point (both finite and at infinity) if, and only if, \(\alpha^2 < 4(\epsilon + \sigma)\). Combining this with Proposition \ref{prop_sing}, we conclude that \(\epsilon + \sigma > 0\) and \(\alpha = 0\)  are necessary and sufficient conditions for the origin to be a global center.
\end{rem}

\begin{prop}\label{thm1}
For $m $ even, the differential system \eqref{gdo} has no infinite equilibrium points in the local chart $U_1$, while it possesses a nilpotent equilibrium at $P=(0, 0)$ in the local chart $U_2$.
\begin{itemize}
\item[(a)]
For \(\epsilon>0\), $P$ is an unstable node with three parabolic sectors.
\item[(b)]
For \(\epsilon<0,\) $P$ is a stable node with three parabolic sectors.
\end{itemize}
\end{prop}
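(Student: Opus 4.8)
The plan is to compute the Poincaré compactification of system \eqref{gdo} in both infinite charts, identify the equilibria at infinity, and then determine their local character. First I would fix $m$ even and write the vector field as $X=(P,Q)$ with $P(x,y)=y$ and $Q(x,y)=-\alpha y-\epsilon x^m-\sigma x$, so the degree of $X$ is $n=m$. Substituting into the chart-$U_1$ formulas \eqref{poincare_comp}, the dominant term as $v\to 0$ comes from $Q(1/v,u/v)\sim -\epsilon v^{-m}$, and after multiplying by $v^m$ the leading contribution to $\dot u$ is $-\epsilon$, a nonzero constant. Hence $\dot u$ cannot vanish on $\{v=0\}$ for small $v$, which establishes that there are \emph{no} infinite equilibria in $U_1$ (and, by the $(-1)^{n-1}$ rule, none in $V_1$ either). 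This is the easy half.

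The substance is the chart $U_2$. Writing out $P(u/v,1/v)=1/v$ and $Q(u/v,1/v)=-\alpha/v-\epsilon u^m/v^m-\sigma u/v$, multiplying by the appropriate powers of $v$ and simplifying, I expect to obtain a system of the form $\dot u = -\epsilon u^m + (\text{terms carrying higher powers of }v)$, $\dot v = v\cdot(\text{something})$, whose only equilibrium on $\{v=0\}$ near the origin is $P=(0,0)$, and at which the linear part is nilpotent (the $u$-equation starts at order $m\ge 2$ in $u$ while the $v$-equation is divisible by $v$). So the Jacobian at $P$ is the zero or a nilpotent matrix, confirming that $P$ is a nilpotent (in fact degenerate) singularity that must be resolved by the quasi-homogeneous blow-up machinery recalled in Subsection \ref{sec:vertical}.

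To determine the local phase portrait at $P$, I would construct the Newton diagram of the $U_2$-field following Subsection \ref{sec:vertical}, read off the appropriate weights $(\alpha,\beta)$ from the slope of its compact face, and perform the directional blow-ups $\hat X^{u}_{\pm}$ and $\hat X^{v}_{\pm}$. Because $m$ is even, the parity remarks in Subsection \ref{sec:vertical} mean the positive and negative $u$-directions need not both be analyzed independently in the same way as the odd case; tracking how the sign of $\epsilon$ enters the leading coefficient is what distinguishes cases (a) and (b). After blowing up, the singularities on the exceptional divisor should be hyperbolic or semi-hyperbolic, classifiable by the standard theorems; assembling the sectors obtained in each directional chart and blowing back down should yield a node with three parabolic sectors, whose stability (unstable for $\epsilon>0$, stable for $\epsilon<0$) is dictated by the sign of $\epsilon$ in the dominant term $-\epsilon u^m$.

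The main obstacle will be the blow-up bookkeeping: correctly choosing $(\alpha,\beta)$ from the Newton diagram, keeping track of the rescaling power $\rho^k$ so the blown-up field is nondegenerate along the exceptional circle, and then gluing the directional charts consistently to recover the global sector structure at $P$. Tracking the sign of $\epsilon$ through these transformations is delicate but ultimately determines the orientation of the three parabolic sectors and hence the stability claim; I would corroborate the final picture against the phase portraits in Figure \ref{M22}. A useful consistency check is that for even $m$ Theorem \ref{evendeg} forbids a global center, so the infinite dynamics must indeed admit orbits escaping to and arriving from infinity, which is compatible with a node of parabolic type at $P$ rather than the two-hyperbolic-sector configuration required by Proposition \ref{prop_main}.
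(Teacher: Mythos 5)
Your plan follows essentially the same route as the paper: show $\dot u$ reduces to the nonzero constant $-\epsilon$ on $v=0$ in $U_1$, identify the degenerate equilibrium at the origin of $U_2$, choose blow-up weights from the Newton diagram, resolve by quasi-homogeneous directional blow-ups, and glue the sectors back (the paper uses weights $(2n,2n-1)$ for $m=2n$ and finds a saddle-node on the exceptional divisor in the $\bar u=\pm1$ charts and regular points in the $\bar v=\pm1$ charts, yielding the three-parabolic-sector node).

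One slip worth flagging: your anticipated $U_2$ system is not of the form $\dot u=-\epsilon u^{m}+O(v)$. Carrying out the substitution actually gives $\dot u=\epsilon u^{m+1}+v^{m-1}(1+\alpha u+\sigma u^2)$ and $\dot v=\epsilon u^{m}v+v^{m}(\alpha+\sigma u)$, so the dominant term on the invariant line $v=0$ is $\epsilon u^{m+1}$ with $m+1$ odd. This matters: your stated leading term $-\epsilon u^{m}$ with $m$ even has constant sign across $u=0$ and would not produce a node (it would suggest a saddle-node--type configuration on the equator instead); it is precisely the odd power $m+1$ that makes the flow repel from (resp.\ attract to) the origin on both sides for $\epsilon>0$ (resp.\ $\epsilon<0$) and hence yields the stability dichotomy in items (a) and (b). Since you explicitly defer to the computation, this would presumably be caught, but as written the mechanism you invoke for the stability claim is based on the wrong normal form.
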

\begin{proof}
Suppose \( m = 2n  \) for some integer \( n \geq 1 \).   System \eqref{gdo} on the local chart \(U_1\) becomes 
\begin{align*}
\dot{u}= -\epsilon-v^{2n-1}(\sigma+\alpha u+u^2),\qquad
\dot{v}= -v^{2n}u,
\end{align*}
and it has no infinite singular points on \(v=0\) in this chart.
On the local chart \(U_2\) system \eqref{gdo} becomes
\begin{align}\label{newnew1e}
\dot{u}=\epsilon u^{2n+1}+v^{2n-1}(1+\alpha u+\sigma u^2),\qquad
\dot{v}=\epsilon u^{2n}v+v^{2n}(\alpha+\sigma u).
\end{align}
The origin of \(U_2,\) O, is an equilibrium point.  To investigate the higher-order equilibrium \((0,0),\) we employ a quasi-homogeneous blow-up technique. Based on the Newton diagram of system \eqref{newnew1e}, the appropriate blow-up is chosen using coefficient \( (2n, 2n-1 ), \) \ie
\begin{equation}\label{qu1e}
(u, v) =(\bar{u} \rho^{2n}, \bar{v} \rho^{2n-1}). 
\end{equation}
The quasi-homogeneous blow-up transformation \eqref{qu1e} with $\bar{u} = 1$ (resp. $\bar{u} = -1, \bar{v} = 1, \bar{v} = -1$), converts system \eqref{newnew1} to the associated vector field $X_1^{+}$ (resp. $X_1^{-}, Y_1^{+}, Y_1^{-}$). These vector fields are defined as follows
\begin{align*}
X_1^{+}:
\begin{cases}
\dfrac{d\rho}{d\eta} = \dfrac {\epsilon{\rho}^{6n}+\rho{v}^{2n-1}
+\alpha{\rho}^{2n+1}{v}^{2n-1}+\sigma{\rho}^{4n+1}{v}^{2n-1}}{2n},\\\\\nonumber
\dfrac{d\bar{v}}{d\eta} = -{\dfrac {2n{v}^{2n}-\epsilon{\rho}^{6n-1}v-{v}^{2n}
-\alpha{\rho}^{2n}{v}^{2n}-\sigma{\rho}^{4n}{v}^{2n}}{2n}},
\end{cases}
\end{align*}

\begin{align*}
X_1^{-}:
\begin{cases}
\dfrac{d\rho}{d\eta} =\dfrac {{\epsilon \rho}^{6n}-\rho{v}^{2n-1}
+\alpha{\rho}^{2n+1}{v}^{2n-1}-\sigma{\rho}^{4n+1}{v}^{2n-1}}{2n}, \\\\\nonumber
\dfrac{d\bar{v}}{d\eta} =\dfrac {2n{v}^{2n}+\epsilon{\rho}^{6n-1}v-{v}^{2n}
+\alpha{\rho}^{2n}{v}^{2n}-\sigma{\rho}^{4n}{v}^{2n}}{2n},
\end{cases}
\end{align*}

\begin{align*}
Y_1^{+}:
\begin{cases}
\dfrac{d\rho}{d\eta} = \mathcal{O}(|\rho, \bar{v}|), \\\\
\dfrac{d\bar{u}}{d\eta} = 1 + \mathcal{O}(|\rho, \bar{v}|),
\end{cases}
\quad\quad
Y_1^{-}:
\begin{cases}
\dfrac{d\rho}{d\eta} = \mathcal{O}(|\rho, \bar{v}|), \\\\
\dfrac{d\bar{u}}{d\eta} = -1 + \mathcal{O}(|\rho, \bar{v}|),
\end{cases}
\end{align*}
and \(d\eta = \rho^{4n^2-6n+1} d\tau.\) 

 The vector fields \(X_1^{+}\) and  \(X_1^{-}\) increase along the \(\rho\)-axis when \(\epsilon > 0\), and decreases for \(\epsilon < 0\). Conversely, along the \(\bar{v}\)-axis, \(X_1^{+}\) (resp.  \(X_1^{-}\)) consistently exhibits a decreasing (resp. increasing) behavior.
Hence, the following statements describe these vector fields:
\begin{itemize}
\item The only equilibria of \(X_1^{+}\) and \(X_1^{-}\) on \(\rho=0\) is the origin, and it is a saddle node.
\item
For the vector fields \( Y_1^{+} \) and \( Y_1^{-} \), the origin is a regular point.
\end{itemize}
Therefore, Figure \ref{LPo1e} illustrates the qualitative properties of the vector fields $X_1^{+},$ $X_1^{-}, Y_1^{+},$ and $Y_1^{-}.$ By shrinking the circles in these figures to the point O of system \eqref{newnew1e}, we obtain Figure \ref{LPoe}.
 
\begin{figure}[t!]
\centering
% First row of images (4 figures)
\includegraphics[width=.22\columnwidth,height=.22\columnwidth]{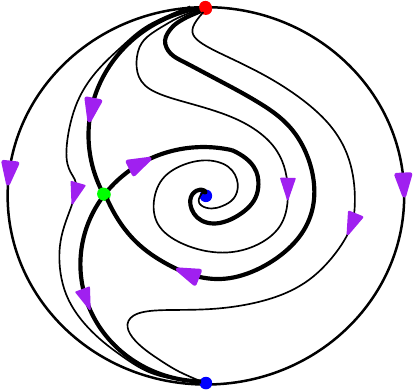}%
\hfill
\includegraphics[width=.22\columnwidth,height=.22\columnwidth]{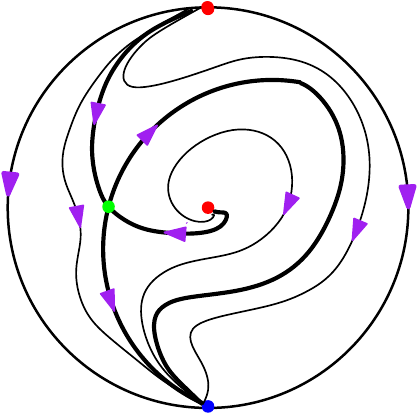}%
\hfill
\includegraphics[width=.22\columnwidth,height=.22\columnwidth]{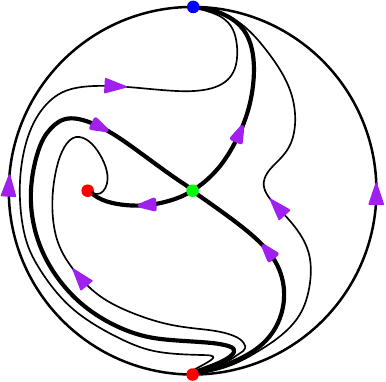}%
\hfill
\includegraphics[width=.22\columnwidth,height=.22\columnwidth]{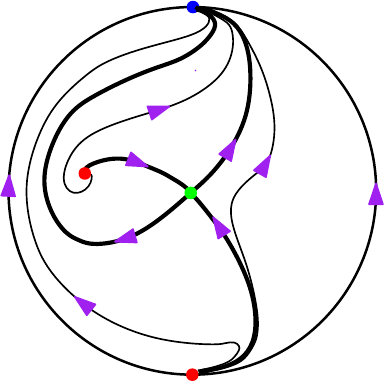}

% First row captions
\smallskip
\begin{minipage}{0.22\columnwidth}
\centering\tiny (a) $\sigma>0$, $\epsilon>0$, $\alpha>0$
\end{minipage}%
\hfill
\begin{minipage}{0.22\columnwidth}
\centering\tiny (b) $\sigma>0$, $\epsilon>0$, $\alpha<0$
\end{minipage}%
\hfill
\begin{minipage}{0.22\columnwidth}
\centering\tiny (c) $\sigma<0$, $\epsilon<0$, $\alpha>0$
\end{minipage}%
\hfill
\begin{minipage}{0.22\columnwidth}
\centering\tiny (d) $\sigma<0$, $\epsilon<0$, $\alpha<0$
\end{minipage}

% Second row of images (4 figures)
\medskip
\includegraphics[width=.22\columnwidth,height=.22\columnwidth]{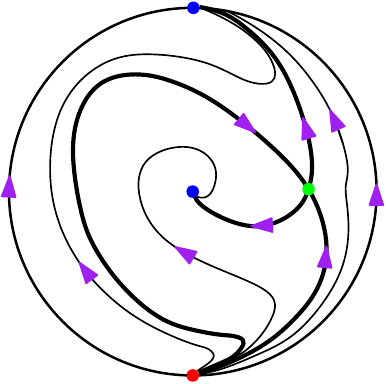}%
\hfill
\includegraphics[width=.22\columnwidth,height=.22\columnwidth]{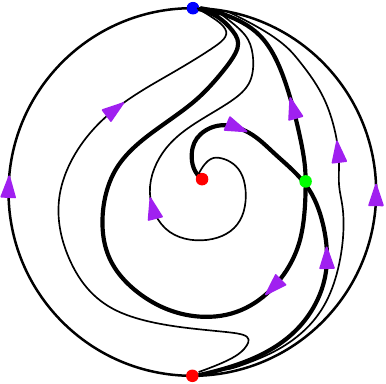}%
\hfill
\includegraphics[width=.22\columnwidth,height=.22\columnwidth]{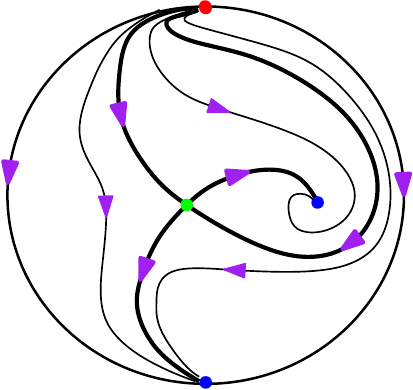}%
\hfill
\includegraphics[width=.22\columnwidth,height=.22\columnwidth]{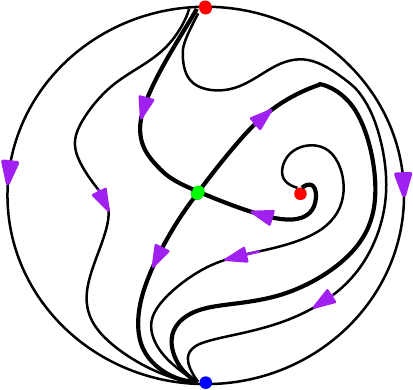}

% Second row captions
\smallskip
\begin{minipage}{0.22\columnwidth}
\centering\tiny (e) $\sigma>0$, $\epsilon<0$, $\alpha>0$
\end{minipage}%
\hfill
\begin{minipage}{0.22\columnwidth}
\centering\tiny (f) $\sigma>0$, $\epsilon<0$, $\alpha<0$
\end{minipage}%
\hfill
\begin{minipage}{0.22\columnwidth}
\centering\tiny (g) $\sigma<0$, $\epsilon>0$, $\alpha>0$
\end{minipage}%
\hfill
\begin{minipage}{0.22\columnwidth}
\centering\tiny (h) $\sigma<0$, $\epsilon>0$, $\alpha<0$
\end{minipage}

\caption{\label{M22} The global phase portraits of system \eqref{gdo} for $m$ even.}
\end{figure}
 
\end{proof}

\begin{prop}\label{thm2}
When \(m>1\) is odd, the differential system \eqref{gdo} has no infinite equilibrium points in the local chart \(U_1,\) and has the linearly zero equilibrium point \(P= (0, 0)\) in the local chart \(U_2.\)
\begin{itemize}
\item[(a)]
The local phase portrait of system \eqref{gdo} around \(P\) is formed by two hyperbolic sectors for \(\epsilon>0.\)
\item[(b)]
The local phase portrait of system \eqref{gdo} around \(P\) is  formed by two parabolic and two elliptic sectors for \(\epsilon<0\) and \(\sigma<0.\)
\item[(c)]
The local phase portrait of system \eqref{gdo} around \(P\) is  formed by  four parabolic and two elliptic sectors when \(\epsilon<0\) and \(\sigma>0.\)
\end{itemize}
\end{prop}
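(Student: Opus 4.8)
The plan is to run, for $m$ odd, the same compactification-and-blow-up scheme used for the even case in Proposition~\ref{thm1}. First I would write system~\eqref{gdo} (here the degree is $n=m$) in the Poincar\'e charts $U_1$ and $U_2$ via \eqref{poincare_comp}. In $U_1$ one obtains
\begin{equation*}
\dot u = -\epsilon - v^{m-1}\bigl(u^2 + \alpha u + \sigma\bigr), \qquad \dot v = -u\,v^{m},
\end{equation*}
and since $\dot u|_{v=0} = -\epsilon \neq 0$, the line $v=0$ carries no equilibria; this settles the first assertion. In $U_2$ one gets
\begin{equation*}
\dot u = \epsilon u^{m+1} + v^{m-1}\bigl(1 + \alpha u + \sigma u^2\bigr), \qquad \dot v = \epsilon u^{m} v + v^{m}\bigl(\alpha + \sigma u\bigr),
\end{equation*}
whose linear part at the origin vanishes because $m>1$, so $P=(0,0)$ is a linearly zero equilibrium, as claimed. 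Two structural facts will organize the rest of the argument: the damping $\alpha$ enters only through strictly higher-weight monomials and so is invisible to the leading-order blow-up analysis; and, crucially for $m$ odd (so $m-1$ is even while $m$ is odd), the $U_2$ field is invariant under the reflection $(u,v)\mapsto(u,-v)$, hence the local phase portrait at $P$ is symmetric with respect to the $u$-axis and it suffices to resolve the half-plane $v\ge 0$.

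Next I would desingularize $P$ by a quasi-homogeneous blow-up. Reading off the support $S=\{(m,0),(-1,m-1),(0,m-1),(1,m-1)\}$ of the $U_2$ field, its Newton diagram has a single compact face whose normal fixes the weights $(a,b)$ of the transformation $(u,v)=(\bar u\,\rho^{a},\bar v\,\rho^{b})$, exactly as in the even case. I would then perform the four directional blow-ups $\hat X^{\pm}$ (along $\pm u$, i.e.\ $\bar u=\pm1$) and $\hat Y^{\pm}$ (along $\pm v$, i.e.\ $\bar v=\pm1$), divide by the appropriate power of $\rho$ to obtain the rescaled fields $\overline X^{\pm}$, $\overline Y^{\pm}$, and locate and classify their equilibria on the exceptional divisor $\rho=0$. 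Paralleling $X_1^{\pm},Y_1^{\pm}$ in Proposition~\ref{thm1}, I expect $\overline Y^{\pm}$ to have only regular points on $\rho=0$, while the equilibria of $\overline X^{\pm}$ are semi-hyperbolic and can be resolved by \cite[Theorem 2.19]{MR2256001}. The decisive sign data are that the radial flow along the divisor is governed by $\mathrm{sign}(\epsilon)$, while $\mathrm{sign}(\sigma)$ controls the number and type of the additional equilibria on the divisor (through the real roots of the defining polynomial of $\overline X^{\pm}$); together with the evenness of $m-1$, these two signs are what separate the three cases.

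Finally I would blow down, collapsing the exceptional circle to $P$ and gluing the directional pictures, using the $u$-axis symmetry to mirror the half-plane $v\ge0$ onto $v\le0$. Reading off the sectors, I expect $\epsilon>0$ to yield two hyperbolic sectors, giving statement (a); $\epsilon<0$ with $\sigma<0$ to yield two parabolic and two elliptic sectors, giving (b); and $\epsilon<0$ with $\sigma>0$ to yield four parabolic and two elliptic sectors, giving (c). The configurations should be cross-checked against the global portraits in Figure~\ref{M33}.

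The step I expect to be the main obstacle is the correct detection and counting of the \emph{elliptic} sectors after blow-down, and in particular the separation of cases (b) and (c): both have $\epsilon<0$ and differ only through $\mathrm{sign}(\sigma)$, which alters how many equilibria appear on the exceptional divisor and hence the gluing pattern (two versus four parabolic sectors). An elliptic sector is produced by orbits that leave $P$ and return to it, a feature the local semi-hyperbolic data do not by themselves reveal; I would pin it down by combining the monotonicity of $\overline X^{\pm}$ along the divisor, the reflection symmetry, and—where the semi-hyperbolic classification leaves ambiguity—an explicit description of the separatrices of the divisor equilibria, so as to determine unambiguously which sectors close up into elliptic regions.
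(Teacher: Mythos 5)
Your overall strategy coincides with the paper's: the chart computations in $U_1$ and $U_2$ are exactly those of the actual proof (so the first two assertions are settled the same way), and the paper likewise resolves $P$ by a quasi-homogeneous blow-up with weights read off the Newton diagram, followed by four directional charts $X_2^{\pm}$, $Y_2^{\pm}$ and a blow-down. The reflection symmetry $(u,v)\mapsto(u,-v)$ you note is correct and a useful addition the paper does not exploit explicitly.

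There is, however, a genuine gap in the mechanism you propose for separating the three cases. The support of the $U_2$ field is $\{(m,0),(-1,m-1),(0,m-1),(1,m-1)\}$, and the unique compact face of the Newton polygon joins $(-1,m-1)$ to $(m,0)$; the points $(0,m-1)$ and $(1,m-1)$, which carry the coefficients $\alpha$ and $\sigma$, lie \emph{strictly above} that face. Hence the leading quasi-homogeneous part is $\epsilon u^{m+1}\partial_u+v^{m-1}\partial_u+\epsilon u^{m}v\,\partial_v$, and the equilibria on the exceptional divisor, together with their number and type, depend only on $\mathrm{sign}(\epsilon)$ — not on $\sigma$. Your claim that ``$\mathrm{sign}(\sigma)$ controls the number and type of the additional equilibria on the divisor through the real roots of the defining polynomial of $\overline X^{\pm}$'' therefore cannot materialize: for every $\sigma$ the only divisor equilibrium in the $X_2^{\pm}$ charts is the origin, exactly as in the paper, whose local pictures (Figure \ref{LPo}) accordingly come in only two flavours, $\epsilon>0$ and $\epsilon<0$. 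The distinction between statements (b) and (c) — two versus four parabolic sectors — must be obtained from information beyond the leading blow-up data, namely from how the separatrices of the finite equilibria (a single saddle at the origin when $\sigma\epsilon>0$, versus two saddles $E^{\pm}$ flanking a third equilibrium when $\sigma<0<\epsilon$... here $\epsilon<0,\sigma>0$) reach infinity; your fallback of ``an explicit description of the separatrices'' is the right instinct, but it is the whole argument for this step, not a patch. A secondary inaccuracy: the divisor singularities of $X_2^{\pm}$ are not semi-hyperbolic (their linear part vanishes again for $n\ge 1$), so \cite[Theorem 2.19]{MR2256001} does not apply; the paper classifies them by direct monotonicity of $\dot\rho$ and $\dot{\bar v}$ along the axes.
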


\begin{proof}
Suppose \( m = 2n + 1 \) for some integer \( n \geq 1 \).   System \eqref{gdo} on the local chart \(U_1\) becomes 
\begin{align*}
\dot{u}= -\epsilon-v^{2n}(\sigma+\alpha u+u^2),\qquad
\dot{v}= -v^{2n+1}u,
\end{align*}
and it has no infinite singular points on \(v=0\) in this chart.
On the local chart \(U_2\) system \eqref{gdo} becomes
\begin{align}\label{newnew1}
\dot{u}=\epsilon u^{2n+2}+v^{2n}(1+\alpha u+\sigma u^2),\qquad
\dot{v}=\epsilon u^{2n+1}v+v^{2n+1}(\alpha+\sigma u).
\end{align}
 \begin{figure}[t!]
\centering
\begin{minipage}{0.45\columnwidth}
\centering
\includegraphics[width=\linewidth,keepaspectratio]{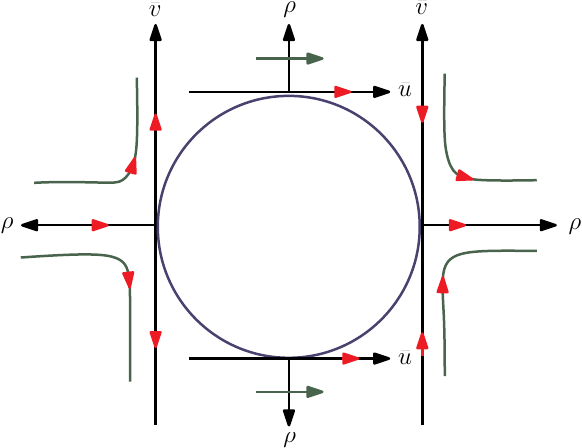}\\
\vspace{4pt}
\tiny (a) $\epsilon>0$
\end{minipage}%
\hspace{0.05\columnwidth}%
\begin{minipage}{0.45\columnwidth}
\centering
\includegraphics[width=\linewidth,keepaspectratio]{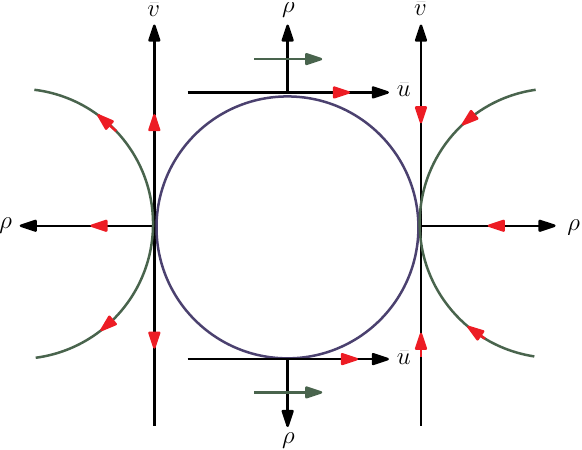}\\
\vspace{4pt}
\tiny (b) $\epsilon<0$
\end{minipage}

 \caption{\label{LPo1} The qualitative properties of vector fields $X_2^{+},$ $X_2^{-}, Y_2^{+},$ and $Y_2^{-}$.}  
\end{figure}
The origin of \(U_2,\) O, is an equilibrium point.  To investigate the higher-order equilibrium \((0,0),\) we employ a quasi-homogeneous blow-up technique. Based on the Newton diagram of system \eqref{newnew1}, the appropriate blow-up is chosen using coefficient \( (n+1, n ), \) \ie
\begin{equation}\label{qu1}
(u, v) =(\bar{u} \rho^{n+1}, \bar{v} \rho^{n}). 
\end{equation}
The quasi-homogeneous blow-up transformation \eqref{qu1} with $\bar{u} = 1$ (resp. $\bar{u} = -1, \bar{v} = 1, \bar{v} = -1$), maps system \eqref{newnew1} to the corresponding vector field $X_2^{+}$ (resp. $X_2^{-}, Y_2^{+}, Y_2^{-}$). These vector fields are given by
\begin{align*}
X_2^{+}:
\begin{cases}
\dfrac{d\rho}{d\eta} = \dfrac{\epsilon{\rho}^{4n+3}+\rho{\bar{v}}^{2n}+\alpha{\rho}^{n+2}{\bar{v}}
^{2n}+\sigma{\rho}^{2n+3}{\bar{v}}^{2n}}{n+1},  \\\\
\dfrac{d\bar{v}}{d\eta} = \dfrac{\epsilon{\rho}^{4n+2}\bar{v}+\alpha{\rho}^{n+1}{\bar{v}}^{2n+1}+\sigma{\rho}^{2n+2}{\bar{v}}^{2n+1}-n{\bar{v}}^{2n+1}}{n+1},
\end{cases}
\end{align*}

\begin{align*}
X_2^{-}:
\begin{cases}
\dfrac{d\rho}{d\eta} = -\dfrac{\epsilon{\rho}^{4n+3}+\rho{\bar{v}}^{2n}-\alpha{\rho}^{n+2}{\bar{v}}
^{2n}+\sigma{\rho}^{2n+3}{\bar{v}}^{2n}}{n+1},  \\\\
\dfrac{d\bar{v}}{d\eta} = \dfrac{-\epsilon{\rho}^{4n+2}\bar{v}+\alpha{\rho}^{n+1}{\bar{v}}^{2n+1}-\sigma{\rho}^{2n+2}{\bar{v}}^{2n+1}+n{\bar{v}}^{2n+1}}{n+1},
\end{cases}
\end{align*}

\begin{align*}
Y_2^{+}:
\begin{cases}
\dfrac{d\rho}{d\eta} = \mathcal{O}(|\rho, \bar{v}|), \\\\
\dfrac{d\bar{u}}{d\eta} = 1 + \mathcal{O}(|\rho, \bar{v}|),
\end{cases}
\quad\quad
Y_2^{-}:
\begin{cases}
\dfrac{d\rho}{d\eta} = \mathcal{O}(|\rho, \bar{v}|), \\\\
\dfrac{d\bar{u}}{d\eta} = 1+ \mathcal{O}(|\rho, \bar{v}|),
\end{cases}
\end{align*}
and \(d\eta = \rho^{2n^2-n-1} d\tau.\) 

 The vector fields \(X_2^{+}\) and  \(X_2^{-}\) increase along the \(\rho\)-axis for \(\epsilon > 0\), and decreases when \(\epsilon < 0\). On the other hand, along the \(\bar{v}\)-axis, the trajectories of vector field \(X_2^{+}\) (resp.  \(X_2^{-}\)) are attracted to (repelled from) the origin.
Therefore, the following statements describe these vector fields:
\begin{itemize}
\item The only equilibria of \(X_2^{+}\) on \(\rho=0\) is the origin, and it is a saddle (resp. stable node) for \(\epsilon>0\) (resp. \(\epsilon<0\)).
\item The only equilibria of \(X_2^{-}\) on \(\rho=0\) is the origin while it is a saddle (resp. unstable node) for \(\epsilon>0\) (resp. \(\epsilon<0\)).
\item
For the vector fields \( Y_2^{+} \) and \( Y_2^{-} \), the origin is a regular point.
\end{itemize}
Therefore, Figure \ref{LPo1} illustrates the qualitative properties of the vector fields $X_2^{+},$ $X_2^{-}, Y_2^{+},$ and $Y_2^{-}.$ By shrinking the circles in these figures to the point O of system \eqref{newnew1}, we obtain Figure \ref{LPo}.
 
\begin{figure}[t!]
\centering
% Figuras más juntas
\includegraphics[width=.34\columnwidth,height=.35\columnwidth]{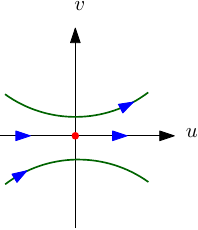}%
\hspace{0.5cm} % Espacio reducido entre imágenes
\includegraphics[width=.34\columnwidth,height=.35\columnwidth]{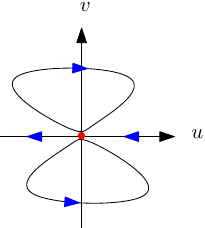}

% Leyendas justo debajo, más compactas
\vspace{0.2cm} % Espacio vertical reducido
\begin{minipage}{0.34\columnwidth}
\centering (a) $\epsilon>0$
\end{minipage}%
\hspace{0.5cm} % Mismo espacio que entre imágenes
\begin{minipage}{0.34\columnwidth}
\centering (b) $\epsilon<0$
\end{minipage}

\caption{\label{LPo}The local phase portraits of the origin of system \eqref{newnew1}.}
\end{figure}
 
\end{proof}

\begin{proof}[Proof of  Theorem~\ref{1}]
    After analyzing the local dynamics of finite equilibrium points (Section~\ref{sec1}) and the global dynamics (Section~\ref{GB}), we conclude the proof of Theorem~\ref{1} by combining these results.
\end{proof}

\begin{figure}[H]
\centering
% First row of 4 images
\includegraphics[width=.22\columnwidth,height=.22\columnwidth]{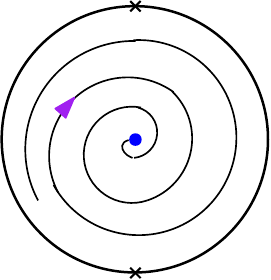}%
\hspace{0.3cm}%
\includegraphics[width=.22\columnwidth,height=.22\columnwidth]{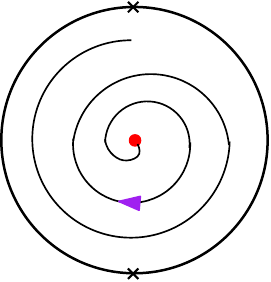}%
\hspace{0.3cm}%
\includegraphics[width=.22\columnwidth,height=.22\columnwidth]{m3nnp}%
\hspace{0.3cm}%
\includegraphics[width=.22\columnwidth,height=.22\columnwidth]{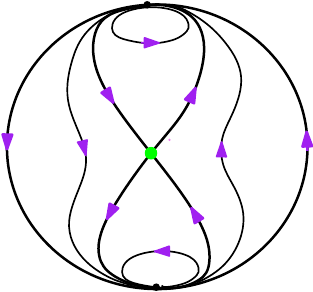}

% First row captions
\vspace{0.1cm}%
\begin{minipage}{0.22\columnwidth}
\centering\footnotesize (a) $\sigma>0$, $\epsilon>0$, $\alpha>0$
\end{minipage}%
\hspace{0.3cm}%
\begin{minipage}{0.22\columnwidth}
\centering\footnotesize (b) $\sigma>0$, $\epsilon>0$, $\alpha<0$
\end{minipage}%
\hspace{0.3cm}%
\begin{minipage}{0.22\columnwidth}
\centering\footnotesize (c) $\sigma<0$, $\epsilon<0$, $\alpha>0$
\end{minipage}%
\hspace{0.3cm}%
\begin{minipage}{0.22\columnwidth}
\centering\footnotesize (d) $\sigma<0$, $\epsilon<0$, $\alpha<0$
\end{minipage}

% Second row of 4 images
\vspace{0.2cm}%
\includegraphics[width=.22\columnwidth,height=.22\columnwidth]{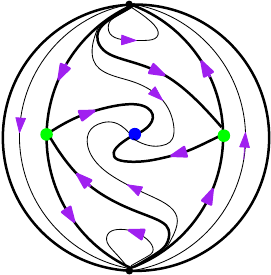}%
\hspace{0.3cm}%
\includegraphics[width=.22\columnwidth,height=.22\columnwidth]{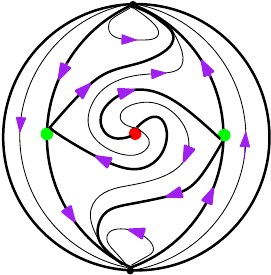}%
\hspace{0.3cm}%
\includegraphics[width=.22\columnwidth,height=.22\columnwidth]{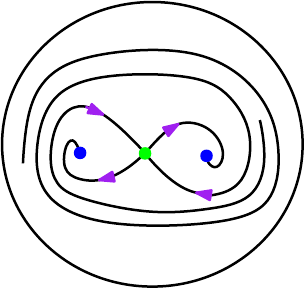}%
\hspace{0.3cm}%
\includegraphics[width=.22\columnwidth,height=.22\columnwidth]{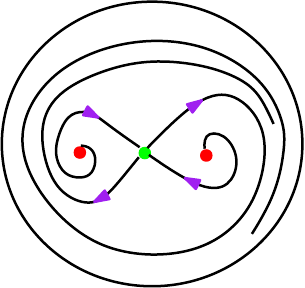}

% Second row captions
\vspace{0.1cm}%
\begin{minipage}{0.22\columnwidth}
\centering\footnotesize (e) $\sigma>0$, $\epsilon<0$, $\alpha>0$
\end{minipage}%
\hspace{0.3cm}%
\begin{minipage}{0.22\columnwidth}
\centering\footnotesize (f) $\sigma>0$, $\epsilon<0$, $\alpha<0$
\end{minipage}%
\hspace{0.3cm}%
\begin{minipage}{0.22\columnwidth}
\centering\footnotesize (g) $\sigma<0$, $\epsilon>0$, $\alpha>0$
\end{minipage}%
\hspace{0.3cm}%
\begin{minipage}{0.22\columnwidth}
\centering\footnotesize (h) $\sigma<0$, $\epsilon>0$, $\alpha<0$
\end{minipage}

\caption{\label{M33}The global phase portraits of system \eqref{gdo} when $m$ is odd.}
\end{figure}

\section{Proof of Theorems \ref{prop1} and \ref{teoB}}\label{main_BC}

This section is dedicated to proving Theorems \ref{prop1} and \ref{teoB}. To do so, we must first prove the following result. 

\begin{prop}\label{prop_center} 
Suppose that $m>1$. The polynomial differential system \eqref{gdo} has a center at the origin if, and only if, $\alpha = 0$ and $\sigma > 0$.
\end{prop}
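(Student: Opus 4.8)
The plan is to prove the two implications separately, using the Bendixson criterion of subsection~\ref{sec:ben_crit} to pin down the sign of $\alpha$ and the Newtonian reduction of subsection~\ref{sec:newtonian} to pin down the sign of $\sigma$. It is worth recording at the outset that for $m>1$ the Jacobian of \eqref{gdo} at the origin is $\left(\begin{smallmatrix} 0 & 1 \\ -\sigma & -\alpha \end{smallmatrix}\right)$, with trace $-\alpha$ and determinant $\sigma$; this already singles out the threshold values $\alpha=0$ and $\sigma=0$, and by Corollary~\ref{cor_pair} the purely imaginary spectrum occurs exactly when $\alpha=0$ and $\sigma>0$.

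For sufficiency I would assume $\alpha=0$ and $\sigma>0$. Then \eqref{gdo} collapses to the Newtonian system $\dot x = y$, $\dot y = -\epsilon x^m-\sigma x$ with potential $U(x)=\frac{\epsilon}{m+1}x^{m+1}+\frac{\sigma}{2}x^2$ and conserved energy $H(x,y)=\tfrac12 y^2+U(x)$, exactly as in the proof of Proposition~\ref{new_prop}. Since $m>1$ forces $U'(0)=0$ and $U''(0)=\sigma>0$, the origin is a strict local minimum of $U$, and Theorem~\ref{maxmin} gives that the origin is a center. (Equivalently, the level sets of $H$ are closed near the origin, or one normalizes the elliptic linear part and invokes the Poincar\'e--Lyapunov Theorem~\ref{PL}.) Note this works for every $m>1$, irrespective of the parity of $m$ or the sign of $\epsilon$, because the quadratic term dominates $U$ near $0$.

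For necessity, suppose the origin is a center, so every neighborhood of it is filled with closed orbits. The divergence of \eqref{gdo} is the constant $-\alpha$; if $\alpha\neq 0$ this has a fixed nonzero sign, so the Bendixson criterion forbids any closed orbit in $\mathbb{R}^2$, contradicting the period annulus (this is the mechanism of Lemma~\ref{L5}, now applied to all closed orbits, not only limit cycles). Hence $\alpha=0$, and we are back in the Newtonian setting. If $\sigma<0$ then $U''(0)=\sigma<0$, so the origin is a strict local maximum of $U$ and therefore a topological saddle by Theorem~\ref{maxmin}, not a center; this excludes $\sigma<0$.

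The step I expect to be the main obstacle is the degenerate threshold $\sigma=0$, where the linear part $\left(\begin{smallmatrix}0&1\\0&0\end{smallmatrix}\right)$ is nilpotent rather than elliptic, Corollary~\ref{cor_pair} no longer applies, and the energy collapses to the monomial potential $U(x)=\frac{\epsilon}{m+1}x^{m+1}$. The nature of the origin is then dictated entirely by the local shape of this monomial: for $m$ even one obtains a horizontal inflection, hence a cusp and no center; for $m$ odd one obtains a strict local extremum whose type is governed by the sign of $\epsilon$, and in particular a strict minimum when $\epsilon>0$, which by Theorem~\ref{maxmin} is again a center. Reconciling this last degenerate case with the stated condition $\sigma>0$ is the delicate point of the argument and the place where the analysis must distinguish the parity of $m$ and the sign of $\epsilon$; I would treat it by a dedicated application of Theorem~\ref{maxmin} to the monomial potential rather than by the linear spectrum.
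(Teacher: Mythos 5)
Your sufficiency argument is the same as the paper's in substance: the paper normalizes the linear part and exhibits the first integral $H(u,v)=u^2+v^2+\frac{2\epsilon}{\sigma(m+1)}u^{m+1}$, then invokes the Poincar\'e--Lyapunov Theorem \ref{PL}, while your route through the Newtonian energy $\frac{1}{2}y^2+U(x)$ and Theorem \ref{maxmin} is the same computation in unnormalized coordinates. Where you genuinely add something is the necessity direction, which the paper's proof does not argue at all (it only establishes the ``if'' half and quotes Corollary \ref{cor_pair} for the spectrum). Your use of the Bendixson criterion to exclude $\alpha\neq 0$ --- the constant divergence $-\alpha$ forbids \emph{any} closed orbit, not merely limit cycles, hence forbids a period annulus --- is correct and supplies exactly the missing half of the equivalence; likewise $\sigma<0$ is correctly excluded, since the origin is then a strict local maximum of $U$ and hence a saddle.

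The obstacle you flag at $\sigma=0$ is real, and you should push it to its conclusion rather than leave it open: for $\alpha=\sigma=0$, $m$ odd and $\epsilon>0$, the system is $\dot x=y$, $\dot y=-\epsilon x^m$ with energy $\frac{1}{2}y^2+\frac{\epsilon}{m+1}x^{m+1}$, whose positive level sets are compact closed curves encircling the origin and containing no other equilibrium; hence the origin \emph{is} a center (a nilpotent one). This is a genuine counterexample to the ``only if'' direction as literally stated, so the proposition holds only if ``center'' is read as ``linear type center'' in the sense of Theorem \ref{prop1}, i.e.\ if one additionally demands an elliptic linearization, which is what forces $\sigma>0$. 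Your argument is therefore complete for the statement the authors actually use downstream, but as written your necessity proof is unfinished: either add the ``linear type'' hypothesis explicitly, or record the case $\sigma=0$, $m$ odd, $\epsilon>0$ as the reason the unqualified claim fails, instead of deferring it as ``the delicate point.''
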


\begin{proof}
Corollary \ref{cor_pair} ensures that the eigenvalues of the Jacobian matrix associated with system \eqref{gdo} are purely imaginary whenever \(\alpha = 0\) and \(\sigma > 0\). 

By applying the linear transformation \((x, y) \mapsto (u, -\sqrt{\sigma} v)\), the generalized Duffing oscillator \eqref{gdo} for \(\alpha = 0\) takes the form of its Jordan canonical representation:
\begin{eqnarray*}\label{jordan0}
\dot{u} = -\sqrt{\sigma} v, \quad
\dot{v} = \sqrt{\sigma} u + \frac{\epsilon}{\sqrt{\sigma}} u^m.
\end{eqnarray*}
Next, by rescaling time as \( t = \frac{\tau}{\sqrt{\sigma}} \), we obtain the system
\begin{eqnarray}\label{eq_res}
\frac{du}{d\tau} = -v, \quad \frac{dv}{d\tau} = u + \frac{\epsilon}{\sigma} u^m.
\end{eqnarray}
We now define the function
\begin{equation*}
H(u, v) = u^2 + v^2 + \frac{2 \epsilon}{\sigma (m+1)} u^{m+1}.
\end{equation*}
We claim that \(H\) is a first integral of system \eqref{gdo}. Indeed, by computing the time derivative of \(H(u, v)\) along the trajectories of this system, we get
\begin{equation*}
\frac{dH}{d\tau} = \frac{\partial H}{\partial u} \frac{du}{d\tau} + \frac{\partial H}{\partial v} \frac{dv}{d\tau}.
\end{equation*}
Substituting \(\frac{du}{d\tau}\) and \(\frac{dv}{d\tau}\), we obtain
\begin{equation*}
\frac{dH}{d\tau} = \left(2u + \frac{2 \epsilon}{\sigma} u^m\right)(-v) + (2v)\left(u + \frac{\epsilon}{\sigma} u^m\right).
\end{equation*}
Thus,
\begin{equation*}
\frac{dH}{d\tau} = -2u v - \frac{2 \epsilon}{\sigma} u^m v + 2u v + \frac{2 \epsilon}{\sigma} u^m v = 0.
\end{equation*}
Since \( \frac{dH}{d\tau} = 0 \), \(H(u, v)\) is constant along trajectories of system \ref{eq_res}. Therefore, \(H\) is a first integral of this system, and the trajectories lie on the level curves of \(H\). This implies that system \ref{eq_res} has periodic orbits around the origin.

The result follows from the Poincaré-Lyapunov Theorem (see Theorem \ref{PL}), which ensures that the origin of system \ref{eq_res} (and therefore of system \eqref{gdo}) is a center.
\end{proof}
\begin{rem}
The center at the origin, guaranteed by Proposition \ref{prop_center}, is reversible. This is because the system \eqref{gdo} is invariant under the transformation $(u, v, t) \mapsto (-u, v, -t)$ when $m$ is odd. This symmetry implies that the trajectories of the system in the phase plane are symmetric with respect to the $v$-axis, which is a characteristic property of reversible centers.
\end{rem}
\begin{proof}[Proof of Theorem \ref{prop1}]
Based on Proposition \ref{prop_center} and Corollary \ref{lem_center}, the result follows.    
\end{proof}

Now, using the classification of the center of the system \eqref{gdo} provided in Theorem \ref{prop1}, we are ready to prove Theorem \ref{teoB}.

\begin{proof}[Proof of Theorem \ref{teoB}]
The case \( m = 1 \) follows directly from Remark~\ref{TCM1}, while the case \( m > 1 \) is a consequence of Theorem~\ref{prop1}, together with item~(a) of Proposition~\ref{thm2} and the conditions established in Proposition~\ref{prop_main}.
\end{proof}

\section{Acknowledgement}
Gabriel Rondón is partially supported by  the Agencia Estatal de Investigaci\'on of Spain grant PID2022-136613NB-100. Nasrin Sadri was in part supported by a grant from IPM (No. 1403370046), P. O. Box 19395-5746, Tehran, Iran.

\bibliographystyle{abbrv}
\bibliography{references1}

\end{document}